\newcommand{\D}		{\mathbb{D}}
\newcommand{\C}		{\mathbb{C}}
\newcommand{\N}		{\mathbb{N}}
\newcommand{\cws}{\stackrel{*}{\to}}
\DeclareMathOperator{\const}{\mathrm{const.}}
\DeclareMathOperator{\dist}{\mathrm{dist}}
\DeclareMathOperator{\diam}{\mathrm{diam}}
\DeclareMathOperator{\supp}{\mathrm{supp}}
\DeclareMathOperator{\im}{\mathrm{Im}}
\DeclareMathOperator{\re}{\mathrm{Re}}
\DeclareMathOperator{\cp}{\mathrm{cp}}
\DeclareMathOperator{\prob}{\mathsf{Prob}}
\DeclareMathOperator{\e}{\mathsf{E}}
\newtheorem{theorem}{Theorem}
\newtheorem{proposition}[theorem]{Proposition}
\newtheorem{corollary}[theorem]{Corollary}
\newtheorem{lemma}[theorem]{Lemma}
\begin{document}

\title[Large Deviations and Linear Statistics for Potential Theoretic Ensembles]{Large Deviations and Linear Statistics for Potential Theoretic Ensembles Associated with Regular Closed Sets}

\author[M. Yattselev]{Maxim L. Yattselev}

\address{Department of Mathematics, University of Oregon, Eugene, OR, 97403}

\email{maximy@uoregon.edu}

\subjclass[2000]{60F10, 15B52, 15A18, 31A15}

\keywords{normal matrix model, potential theoretic ensembles, large deviation principle, linear statistics}

\maketitle

\begin{abstract}
A two-dimensional statistical model of $N$ charged particles interacting via logarithmic repulsion in the presence of an oppositely charged regular closed region $K$ whose charge density is determined by its equilibrium potential at an inverse temperature $\beta$ is investigated. When the charge on the region, $s$, is greater than $N$, the particles accumulate in a neighborhood of the boundary of $K$, and form a  point process in the complex plane. We describe the weak$^*$ limits of the joint intensities of this point process and show that it is exponentially likely to find the process in a neighborhood of the equilibrium measure for $K$.
\end{abstract}

\maketitle

\section{Introduction}

In two-dimensional electrostatics, charged particles are identified with points in the extended complex plane.  The potential energy of a system of two like charged particles located at $z, w \in \C$ is proportional to $-\log| z - w |$.  More generally, if $z_1, z_2, \ldots, z_N$ are the locations of $N$ identically charged particles, then $\{z_1,\ldots,z_N\}$ determines the state of the system and the potential energy of this state is given by
\[
 -\sum_{m < n} \log| z_n - z_m |.
\]
The energy is minimized when the particles are all at $\infty$.  In order for the system to be found in a state where the particles are at finite positions, there needs to be a potential (or other obstructions) which repels the particles from $\infty$.  We represent this field by $V$ so that the interaction energy between a particle located at $z$ and the field is given by $V(z)$.  The total potential energy of the system comprised of the $N$ particles in the field is given by
\[
E(z_1,\ldots,z_N) = \sum_{n=1}^N V(z_n) - \sum_{m < n} \log| z_n - z_m |.
\]

The system is assumed to be in contact with a heat reservoir so that the energy of the system is variable, but the temperature is fixed.  In this setting, $\beta$ denotes the reciprocal of the temperature, and the Boltzmann factor for the state $\{z_1,\ldots,z_N\}$ is given by
\[
e^{-\beta E(z_1,\ldots,z_N)} = \bigg\{\prod_{n=1}^N e^{-\beta V(z_n)} \bigg\} \prod_{m < n} |z_n - z_m|^{\beta}.
\]
This quantity gives the relative density of states, so that the probability (density) of finding the system in state $\{z_1,\ldots,z_N\}$ is
given by
\[
Z_{N,\beta,V}^{-1} e^{-\beta E(z_1,\ldots,z_N)}, \quad Z_{N,\beta,V} = \int_{\C^N} e^{-\beta E(z_1,\ldots,z_N)} \mathrm{d}A^{\otimes N}(z_1,\ldots,z_N).
\]
Let $\omega:=\sum_{n=1}^N\delta_{z_n}$ be the empirical measure associated with a state $\{z_1,\ldots,z_N\}$. In this work we take $V$ to be minus the equilibrium potential for some regular closed region $K$ and study the following two questions. First: what is probability to find the system (empirical measure $\omega$) close to a given Borel measure on the complex plane (large deviation principle); second: what is the limiting behavior of the marginal probabilities of the probability density function for this system (linear statistics).

These questions have been obviously considered before. In \cite{Voic93,Voic94,BenAGui97}, see also \cite{HiaiPetz,AndersonGuionnetZeitouni}, the large deviation principle was shown for the case of the external field $V$ being identically $+\infty$ of the real line (thus, all the charges are confined to the real line) and satisfying $\lim_{|x|\to\infty}\left(\log|x|-\epsilon V(x)\right)=-\infty$ for any $\epsilon>0$. This work was further extended to the case $V(z)=|z|^2$ without confinement to the real line in \cite{BenAZei98,HiPetz_ADEMP98}. In \cite{HiPetz00}, the large deviation principle was shown to holds for particles restricted to the unit circle, i.e., $V$ is continuous on the unit circle and $V\equiv+\infty$ of it. 

As to the linear statistics, the case of particles confined to the real line and $V$ being polynomial of even degree with positive leading coefficient was studied in \cite{Joh98}. The case where particles are restricted to a compact subset of the complex plane interacting in the presence of a continuous field was treated in \cite{ElFel05}. No confinement case with regular fields satisfying $V(z)\geq(1+\epsilon)\log(1+|z|)$ for some $\epsilon>0$ was considered in \cite{uHedMak}.

\section{Main Results}

\subsection{Potential Theoretic Setting}

For any probability Borel measure on $\C$, say $\nu$, set
\[
I[\nu] := \int\log\frac{1}{|z-u|}\mathrm{d}\nu^{\otimes2}(z,u)
\]
to be its \emph{logarithmic energy} (negative free entropy), where $\mathrm{d}\nu^{\otimes n}(z_1,,\ldots,z_n):=\mathrm{d}\nu(z_1)\cdots\mathrm{d}\nu(z_n)$. For any compact set $K$ the \emph{logarithmic capacity} of $K$ is defined by
\[
\cp(K) := \exp\left\{-\inf_{\supp(\nu)\subseteq K} I[\nu]\right\}.
\]
It is known that either $\cp(K)=0$ ($K$ is \emph{polar}) or there exists the unique measure $\omega_K$, the \emph{logarithmic equilibrium distribution} on $K$, that realizes the infimum. That is, 
\begin{equation}
\label{capacity}
\cp(K) = \exp\bigg\{-I[\omega_K]\bigg\}.
\end{equation}
The \emph{logarithmic potential} of $\omega_K$, that is,
\[
V^{\omega_K}(z) := \int \log\frac1{|z-u|}\mathrm{d}\omega_K(u),
\]
is equal to $I[\omega_K]$ \emph{quasi everywhere} (up to a polar set) on $K$ and is at most as large everywhere in the complex plane. The set $K$ is called \emph{regular} with respect to the Dirichlet problem if $V^{\omega_K}=I[\omega_K]$ everywhere on $K$. The \emph{Green function} with a pole at infinity for the unbounded component of $K^c$, the complement of $K$, is defined by
\[
g_K := I[\omega] - V^{\omega_K}.
\]
It is a non-negative harmonic function in $K^c\setminus\{\infty\}$ with a logarithmic singularity at infinity. Moreover, it is zero q.e. on $K$ and is, in fact, continuous if $K$ is regular.

Let $s$ and $N$ be two parameters such that $s>N$. We always assume that $s$ and $N$ scale in such a fashion that the limit of $s^{-1}N$ as $N$ tends to infinity exists and define the following energy functional
\begin{equation}
\label{energy}
I_\ell[\nu] := I[\nu] + \frac2\ell\int g_K\mathrm{d}\nu, \qquad \ell:=\lim_{N\to\infty}s^{-1}N,
\end{equation}
where it is understood that $I_0[\nu]=\infty$ for all $\nu$ such that $\supp(\nu)\cap K^c\neq\varnothing$.
\begin{proposition}
\label{prop:minenergy}
Let $K$ be a compact set with connected complement which is regular with respect to the Dirichlet problem. For all $\ell\in[0,1]$, it holds that
\begin{equation}
\label{inequalities}
I[\omega_K] < I_\ell[\nu]
\end{equation}
for any compactly supported probability Borel measure $\nu\neq\omega_K$.
\end{proposition}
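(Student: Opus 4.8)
The plan is to expand $I_\ell$ around the candidate minimizer $\omega_K$ and to reduce \eqref{inequalities} to the strict positive-definiteness of the logarithmic energy on compactly supported signed measures of total mass zero. Since $K$ carries $\omega_K$ it is non-polar, so $I[\omega_K]=-\log\cp(K)$ is a real number by \eqref{capacity}; because $K$ is regular, $g_K$ is continuous on $\C$ and hence so is $V^{\omega_K}=I[\omega_K]-g_K$; and since $g_K=0$ q.e.\ on $K=\supp(\omega_K)$ while $\omega_K$, having finite energy, charges no polar set, $\int g_K\,\mathrm{d}\omega_K=0$, whence $I_\ell[\omega_K]=I[\omega_K]$ for all $\ell\in[0,1]$. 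Thus it is enough to prove $I_\ell[\nu]>I[\omega_K]$ whenever $\nu\neq\omega_K$.

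First I would dispose of the degenerate cases: if $I_\ell[\nu]=+\infty$ there is nothing to prove, and this covers $\ell=0$ with $\supp(\nu)\not\subseteq K$, the case $I[\nu]=+\infty$, and (recalling $g_K\ge0$) the case $\ell>0$ with $\int g_K\,\mathrm{d}\nu=+\infty$. So I may assume $I[\nu]<\infty$ and $\int g_K\,\mathrm{d}\nu<\infty$; the latter is automatic when $\ell=0$, since then $\supp(\nu)\subseteq K$ forces $\int g_K\,\mathrm{d}\nu=0$. I also note that every compactly supported measure has logarithmic energy $>-\infty$, so the quantities below are honest elements of $(-\infty,+\infty]$.

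Next I set $\sigma:=\nu-\omega_K$, a compactly supported signed measure with $\sigma(\C)=0$. Continuity of $V^{\omega_K}$ and compact support of $\nu$ make $\int V^{\omega_K}\,\mathrm{d}\nu$ finite, so $I[\nu+\omega_K]=I[\nu]+2\int V^{\omega_K}\,\mathrm{d}\nu+I[\omega_K]<\infty$; consequently $\iint\bigl|\log|z-u|\bigr|\,\mathrm{d}|\sigma|(z)\,\mathrm{d}|\sigma|(u)<\infty$ and $I[\sigma]\in\R$. These bounds justify the bilinear expansion
\begin{equation*}
I_\ell[\nu]-I[\omega_K]=I[\sigma]+2\int V^{\omega_K}\,\mathrm{d}\sigma+\frac2\ell\int g_K\,\mathrm{d}\nu\qquad(\ell\in(0,1]).
\end{equation*}
Since $\int V^{\omega_K}\,\mathrm{d}\sigma=\int V^{\omega_K}\,\mathrm{d}\nu-\int V^{\omega_K}\,\mathrm{d}\omega_K=\big(I[\omega_K]-\int g_K\,\mathrm{d}\nu\big)-I[\omega_K]=-\int g_K\,\mathrm{d}\nu$, this simplifies to
\begin{equation*}
I_\ell[\nu]-I[\omega_K]=I[\sigma]+2\Big(\tfrac1\ell-1\Big)\int g_K\,\mathrm{d}\nu\qquad(\ell\in(0,1]),
\end{equation*}
and the same computation with $\int g_K\,\mathrm{d}\nu=0$ gives $I_0[\nu]-I[\omega_K]=I[\sigma]$ in the remaining case.

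Finally I conclude. As $\ell\le1$, $\tfrac1\ell-1\ge0$, and $g_K\ge0$, so the last term is non-negative; and $I[\sigma]>0$ because $\sigma$ is a nonzero compactly supported signed measure of finite energy with total mass zero. This last fact is the classical energy principle — for instance, $I[\sigma]$ equals, up to a positive multiplicative constant, the Dirichlet energy of $V^\sigma$ over $\C$, which vanishes only if $V^\sigma$ is constant, i.e.\ (since $V^\sigma(z)\to0$ as $|z|\to\infty$) only if $\sigma=0$. Hence $I_\ell[\nu]-I[\omega_K]>0$ whenever $\nu\neq\omega_K$, which is \eqref{inequalities}; in fact $\omega_K$ is then seen to be the \emph{unique} minimizer of $I_\ell$, with minimum value $I[\omega_K]$. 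I expect the main obstacle to be not conceptual but the integrability bookkeeping that legitimizes the bilinear expansion — which is exactly where regularity of $K$ (via continuity of $V^{\omega_K}$) enters — together with a careful invocation of the \emph{strict} form of the energy principle; a pleasant feature of this direct route is that, unlike the standard weighted-potential machinery, it requires no growth condition on the ``external field'' $\tfrac1\ell g_K$ and so handles the borderline value $\ell=1$ with no extra work.
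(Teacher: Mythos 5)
Your proof is correct, but it takes a genuinely different route from the paper. You expand $I_\ell$ quadratically around $\omega_K$: writing $\sigma=\nu-\omega_K$ and using $V^{\omega_K}=I[\omega_K]-g_K$ (so that $\int V^{\omega_K}\mathrm{d}\sigma=-\int g_K\mathrm{d}\nu$), you obtain the identity $I_\ell[\nu]-I[\omega_K]=I[\sigma]+2\bigl(\tfrac1\ell-1\bigr)\int g_K\mathrm{d}\nu$ and conclude by the strict positive definiteness of the logarithmic energy on compactly supported signed measures of total mass zero (e.g.\ \cite[Lem.~I.1.8]{SaffTotik}), together with $g_K\geq0$ and $\ell\leq1$; your integrability bookkeeping (finiteness of $\iint|\log|z-u||\,\mathrm{d}|\sigma|^{\otimes2}$ via $|\sigma|\leq\nu+\omega_K$ and boundedness of $V^{\omega_K}$ on compacta) is what legitimizes the bilinear expansion, and the degenerate cases $I_\ell[\nu]=\infty$ are disposed of correctly. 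The paper instead splits into cases: for $\supp(\nu)\subseteq K$ it quotes the defining minimality and uniqueness of $\omega_K$, and otherwise it sweeps $\nu_{|K^c}$ onto $K$ by balayage, using $V^{\widehat\nu}\leq V^\nu+\int g_K\mathrm{d}\nu$ with strict inequality off $K$ to get $I[\omega_K]\leq I[\widehat\nu]<I[\nu]+2\int g_K\mathrm{d}\nu=I_1[\nu]\leq I_\ell[\nu]$. Your route avoids balayage and any case distinction, and yields a quantitative lower bound on the gap (it is at least $I[\nu-\omega_K]$), whereas the paper's balayage argument localizes everything to measures on $K$ and leans only on standard balayage facts plus the (cited) uniqueness of the equilibrium measure — which of course itself ultimately rests on the same energy principle you invoke. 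One small caveat: your parenthetical justification of strict positivity of $I[\sigma]$ via the Dirichlet energy of $V^\sigma$ is heuristic as stated (it needs a distributional or Fourier-type argument when $V^\sigma$ is not smooth), so you should simply cite the classical lemma rather than sketch it; with that citation the proof is complete.
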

Clearly, $I_\ell[\nu]=I[\nu]$ for any measure $\nu$ supported on $K$ as $g_K\equiv0$ on $K$. Hence, $I_\ell[\omega_K]=I[\omega_K]$ and therefore $\omega_K$ is the unique minimizer of the weighted energy functional $I_\ell$ for any $\ell\in[0,1]$.

\subsection{A Model for Random Configurations}

Let $K$ be a compact set with connected complement which is regular with respect to the Dirichlet problem. In this paper we investigate random configurations whose joint density is given by  
\begin{equation}
\label{Omega}
\Omega_{N,s,\beta}(z_1,\ldots,z_N) :=\frac{1}{Z_{N,s,\beta}}\exp\left\{-\beta s\sum_{n=1}^Ng_K(z_n) \right\} \prod_{m < n} |z_n - z_m |^\beta,
\end{equation}
where $(N,s,\beta)$ is a triple of numbers such that
\begin{equation}
\label{snbeta}
\beta(s-N+1)>2+c_0
\end{equation}
for some fixed $c_0>0$, and $Z_{N,s,\beta}$ is a normalizing factor that turns $\Omega_{N,s,\beta}$ into a probability density function. Clearly,
\begin{equation}
\label{ZNsbeta}
Z_{N,s,\beta} = \int_{\C^N} \exp\left\{ -\beta s\sum_{n=1}^N g_K(z_n) \right\} \,\prod_{m < n} |z_n - z_m|^\beta \, \mathrm{d}A^{\otimes N}(z_1,\ldots,z_N),
\end{equation}
where $\mathrm{d}A$ stands for the Lebesgue measure on $\C$. 

Besides connections to electrostatics and random matrix theory, the results below are also motivated by number theory. Let $K$ be such that $\cp(K)=1$ and $p$ be a polynomial. The Mahler of $p$ with respect to $K$ is defined by
\[
M_K(p) := \exp\left\{\int\log|p|\mathrm{d}\omega_K\right\}=\exp\bigg\{a_p\sum_{z:~p(z)=0}g_K(z)\bigg\},
\]
where $a_p$ is the leading coefficient of $p$. When $K=\overline\D$, it is known that $\mathrm{d}\omega_K(z)=\frac1{2\pi}|\mathrm{d}z|$, and therefore $M:=M_{\overline\D}$ is simply the classical Mahler measure. In \cite{ChVaa01}, the bound for the number of polynomials with integer coefficients of degree at most $N$ such that $M(f)\leq\const$ was derived. The main term of the asymptotics for this bound came from $Z_{N,N+1,2}$ defined by \eqref{ZNsbeta} with $K=\overline\D$. Moreover, it was shown that $Z_{N,s,2}$ is a rational function of $s$ with poles at every positive integer less or equal to $N$. The cases where $K$ is an ellipse and $E=[-2,2]$ were also investigate in \cite{Sin04,Sin05}.

\subsection{Large Deviation Principle}

Let $\boldsymbol\eta=\{\eta_1,\ldots,\eta_N\}$ be a random configuration chosen according to the law $\Omega_{N,s,\beta}$. That is, the probability that $\eta_j\in O$, $j\in\{1,\ldots,N\}$, is equal to $\int_{O^N}\Omega_{N,s,\beta}\mathrm{d}A^{\otimes N}$ for any open set $O\subseteq\overline\C$. To any such configuration we associate the empirical measure $\omega_{\boldsymbol\eta}$ defined as
\[
\omega_{\boldsymbol\eta} := \frac1N\sum_{k=1}^N\delta_{\eta_k},
\]
where $\delta_z$ is the classical Dirac delta distribution with the unit mass at $z$.

Let $\nu$ and $\mu$ be two probability Borel measures on $\C$. Then the distance between them is defined by
\[
\dist(\nu,\mu) = \sup_f\left|\int f\mathrm{d}\nu-\int f\mathrm{d}\mu\right|,
\]
where the supremum is taken over all functions $f$ that are bounded by 1 in modulus and satisfy the Lipschitz condition with constant 1 on $\supp(\nu)\cup\sup(\mu)$. For measures supported on a compact set it holds that $\dist(\nu,\nu_n)\to0$ as $n\to\infty$ if and only if $\nu_n\cws\nu$ as $n\to\infty$, where $\cws$ stands for the convergence in the weak$^*$ topology of measures ($\int f\mathrm{d}\nu_n\to\int f\mathrm{d}\nu$ for any continuous function $f$).

\begin{theorem}
\label{thm:partition}
Let $K$ be a compact set with connected complement which is regular with respect to the Dirichlet problem and such that $K=\overline{K^\circ}$. Given \eqref{snbeta}, it holds that
\begin{equation}
\label{partitionlimit}
\lim_{N\to\infty}\frac{1}{N^2}\log Z_{N,s,\beta} = -\frac\beta2 I[\omega_K].
\end{equation}
Moreover, for any probability Borel measure $\nu$, $\supp(\nu)\subset\C$, it is true that
\begin{equation}
\label{largedeviation}
\lim_{\epsilon\to0}\lim_{N\to\infty} \frac{1}{N^2}\log \prob\left\{\dist\left(\nu,\omega_{\boldsymbol\eta}\right)<\epsilon\right\} = -\frac\beta2\bigg(I_\ell[\nu]-I[\omega_K]\bigg).
\end{equation}
\end{theorem}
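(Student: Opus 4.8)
The plan is to establish \eqref{partitionlimit} and \eqref{largedeviation} together via the standard Laplace/large-deviation machinery adapted to the potential-theoretic energy $I_\ell$, treating $Z_{N,s,\beta}$ as the total mass and the probabilities as restricted integrals. Write the density as $\exp\{-\beta H_N(z_1,\dots,z_N)\}$ where
\[
H_N(z_1,\dots,z_N) = s\sum_{n=1}^N g_K(z_n) - \sum_{m<n}\log|z_n-z_m|,
\]
and note that in terms of the empirical measure $\omega_{\boldsymbol\eta}=\frac1N\sum\delta_{z_n}$ one has, after removing the diagonal, $H_N \approx N^2\big(\tfrac12 I[\omega_{\boldsymbol\eta}] + \tfrac{s}{N}\int g_K\,\mathrm{d}\omega_{\boldsymbol\eta}\big) = \tfrac{N^2}{2} I_{N/s}[\omega_{\boldsymbol\eta}]$, up to the usual self-interaction correction of order $N\log N$. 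So heuristically the measure on configurations concentrates where $I_\ell[\nu]$ is minimized, which by Proposition~\ref{prop:minenergy} is uniquely at $\nu=\omega_K$ with value $I[\omega_K]$, and the exponential rate is $\frac{\beta}{2}(I_\ell[\nu]-I[\omega_K])$.

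First I would prove the \textbf{upper bound}. Fix $\nu$ and $\epsilon>0$. On the event $\dist(\nu,\omega_{\boldsymbol\eta})<\epsilon$, I regularize the logarithmic kernel by truncation, $\log\frac1{|z-u|}\wedge M$, which is continuous and bounded; since the true kernel is larger, this only decreases $-\sum_{m<n}\log|z_n-z_m|$ after accounting for the diagonal, giving
\[
H_N(z_1,\dots,z_N) \ge \frac{N^2}{2}\iint_{\ }\Big(\big(\log\tfrac1{|z-u|}\big)\wedge M\Big)\mathrm{d}\omega_{\boldsymbol\eta}^{\otimes2} + s N\!\int g_K\,\mathrm{d}\omega_{\boldsymbol\eta} - O(N\log N).
\]
Weak$^*$ continuity of the truncated functional together with $\dist(\nu,\omega_{\boldsymbol\eta})<\epsilon$ bounds the right side below by $\frac{N^2}{2} I_\ell^{(M)}[\nu] - o(N^2)$; the crude volume bound $\int_{\C^N}(\cdots)\,\mathrm{d}A^{\otimes N}$ over a fixed large ball (which suffices because $g_K$ grows like $\log|z|$ at infinity and \eqref{snbeta} forces $\beta s$ large enough to render the tail integrable, as in the definition of $Z_{N,s,\beta}$) contributes only $e^{O(N\log N)}$. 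Letting $N\to\infty$, then $M\to\infty$ (monotone convergence on $I_\ell^{(M)}[\nu]\uparrow I_\ell[\nu]$), then $\epsilon\to0$ yields $\limsup \frac1{N^2}\log\prob\{\cdots\}\le -\frac\beta2(I_\ell[\nu]-I[\omega_K])$, provided I already know the matching statement $\liminf\frac1{N^2}\log Z_{N,s,\beta}\ge-\frac\beta2 I[\omega_K]$, which is the $\nu=\omega_K$ case of the lower bound below.

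Next the \textbf{lower bound}, which simultaneously gives \eqref{partitionlimit}. Given $\nu$ with compact support and $\epsilon>0$, I may assume $I_\ell[\nu]<\infty$, so $\supp(\nu)$ lies in the region where $g_K$ is finite. Approximate $\nu$ in the weak$^*$ metric by a measure $\nu'$ with a bounded density and no atoms, supported in a slightly enlarged set; standard mollification shows $I[\nu']\to I[\nu]$ and $\int g_K\,\mathrm{d}\nu'\to\int g_K\,\mathrm{d}\nu$. The hypothesis $K=\overline{K^\circ}$ (together with regularity, so $g_K$ is continuous) guarantees that every relevant $\nu$ can be so approximated within the support of configurations allowed by the model, and in particular $\omega_K$ itself is approximable by absolutely continuous measures supported on $K^\circ$ where $g_K\equiv0$. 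Then I integrate $\Omega_{N,s,\beta}$ (resp. the unnormalized density for $Z_{N,s,\beta}$) over the product of small disjoint balls $B(w_k,\delta/N^{a})$ around points $w_1,\dots,w_N$ that are $\delta$-equidistributed for $\nu'$. On this event one has $\log|z_n-z_m|\ge\log|w_n-w_m| - o(1)$ for well-separated pairs and a controlled $\log N$ loss for the close ones, while $\sum g_K(z_n)\le N\int g_K\,\mathrm{d}\nu' + o(N)$ by uniform continuity; the volume of the product of balls is $e^{-O(N\log N)}$. A Riemann-sum estimate, again with the truncation trick to handle the log-singularity on the diagonal from below, gives $\frac1N\sum_{m<n}\log|w_n-w_m|\ge \frac{N}{2}(-I[\nu']) - o(N)$ and hence the integrand is at least $\exp\{-\frac{\beta N^2}{2} I_\ell[\nu'] - o(N^2)\}$. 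Dividing by $Z_{N,s,\beta}$ and combining the $\nu=\omega_K$ instance (which pins down $\log Z_{N,s,\beta}=-\frac\beta2 N^2 I[\omega_K]+o(N^2)$, matching the upper bound already proved for the partition function) yields $\liminf\frac1{N^2}\log\prob\{\dist(\nu',\omega_{\boldsymbol\eta})<\epsilon/2\}\ge -\frac\beta2(I_\ell[\nu']-I[\omega_K])$; passing $\nu'\to\nu$ and then $\epsilon\to0$ completes the proof.

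The main obstacle will be the interaction between the unboundedness of the configuration space and the single-particle weight $g_K$, which only grows logarithmically at infinity: I must verify that condition \eqref{snbeta} is exactly what makes $\exp\{-\beta s g_K(z)\}\,\mathrm{d}A(z)$ decay fast enough (the effective exponent at infinity being $\beta s$ against a measure that, after pairing $N-1$ particles, sees a net growth rate of order $\beta s - \beta(N-1)$, whence the requirement $\beta(s-N+1)>2$) so that the contribution of configurations with particles near infinity is negligible at the $e^{O(N^2)}$ scale — this is what allows the upper bound to be reduced to a fixed large ball and lets me ignore the non-compactness in the continuity arguments. The secondary technical point is the uniform control of the diagonal truncation error: one needs the $M\to\infty$ limit of $I_\ell^{(M)}$ to be exchangeable with $N\to\infty$, which follows from lower semicontinuity of $I_\ell$ and the explicit $O(N\log N)$ bound on the self-energy correction, but it must be stated carefully because $I_\ell[\nu]$ can be $+\infty$.
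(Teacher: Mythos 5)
Your overall scheme (mollify, discretize, integrate over a tube of small balls for the lower bound; truncate the kernel and use lower semicontinuity for the upper bound) is the same in spirit as the paper's, but two steps in your plan are genuinely missing or would fail as stated. First, you never prove the upper half of \eqref{partitionlimit}, i.e. $\limsup N^{-2}\log Z_{N,s,\beta}\le-\tfrac\beta2 I[\omega_K]$: your upper-bound machinery only estimates integrals of the density restricted to the event $\{\dist(\nu,\omega_{\boldsymbol\eta})<\epsilon\}$ \emph{relative to} $Z_{N,s,\beta}$, and in the lower-bound paragraph you invoke "the upper bound already proved for the partition function'' which appears nowhere. Moreover, your device for non-compactness — "restrict to a fixed large ball, the tail contributes $e^{O(N\log N)}$'' — is asserted, not proved, and it is not innocuous: the event $\dist(\nu,\omega_{\boldsymbol\eta})<\epsilon$ does \emph{not} confine configurations to any ball (the bounded-Lipschitz distance is blind to where an $\epsilon$-fraction of the mass goes), the truncated kernel $\min\{M,\log\frac1{|z-u|}\}$ is unbounded below at infinity so its weak$^*$ continuity argument needs tightness, and under \eqref{snbeta} the net per-particle decay is only $|z|^{-(2+c_0)}$, so escaping particles are controlled at scale $e^{O(N)}$ but not trivially at the level of the energy functional. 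One can repair this with the standard bounded-below modified kernel $\log\frac1{|z-u|}+\frac sN\big(g_K(z)+g_K(u)\big)$ and an exponential-tightness estimate, but that is exactly the obstacle you flag without resolving. The paper avoids it entirely: for the partition-function upper bound it uses the weighted Fekete configuration and the Bernstein--Walsh inequality (Lemma~\ref{lem:fekete}) to see that the supremum of $\prod_{m<n}|z_n-z_m|\prod_n w_K^{N-1}(z_n)$ over all of $\C^N$ is attained on $K^N$, then applies Fekete--Szeg\H{o} and integrates the leftover factor $w_K^{(s+1-N)\beta}$; for the large-deviation upper bound it peels off one integrable factor $w_K^{2+c_0}$ per particle and maximizes the remaining (coordinate-wise decaying) function over the closed $\epsilon$-neighborhood, then applies the principle of descent to the maximizing configurations.

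Second, your lower bound breaks when $\ell=0$. If $I_0[\nu]<\infty$ then $\supp(\nu)\subseteq K$, but your mollified measure $\nu'$ is "supported in a slightly enlarged set,'' hence generally carries mass off $K$ where $g_K>0$; the tube construction then picks up a term of order $\beta s N\int g_K\,\mathrm{d}\nu'$, which is \emph{not} $o(N^2)$ when $s/N\to\infty$, and it does not vanish in your iterated limit ($N\to\infty$ first, mollification width second), since $s/N$ may blow up arbitrarily fast relative to the fixed mollification. You need the approximating configurations to live where $g_K\equiv0$, i.e. strictly inside $K$: this is why the paper first replaces $\nu$ by its balayage-type approximants $\nu_m$ supported on the interior sets $K_m$ (Lemma~\ref{lem:Em}, which also uses the hypothesis $K=\overline{K^\circ}$ and proves $I[\nu_m]\to I[\nu]$), and only then mollifies with $\epsilon<1/2m$ so that $(\nu_m)_\epsilon$ and the discretized configurations stay in $K^\circ$. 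Your remark that $\omega_K$ can be pushed inside $K^\circ$ shows you have the right idea for the partition function, but the general-$\nu$, $\ell=0$ case of \eqref{largedeviation} requires this interior approximation and is a separate case you have not handled.
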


Equations \eqref{inequalities} and \eqref{largedeviation} yield that the probability to find $\omega_{\boldsymbol\eta}$ in a small enough neighborhood of $\nu$ is subexponentially small if $I_\ell[\nu]=\infty$ and is exponentially small  if $I_\ell[\nu]<\infty$ and $\dist\left(\nu,\omega_K\right)>0$.

The asymptotics in \eqref{partitionlimit} can be improved if we restrict the attention to Jordan domains with smooth boundary and $\beta=2$.
\begin{proposition}
\label{prop:beta2}
Let $K$ be a Jordan domain whose boundary $\partial K$ is a Jordan curve of class\footnote{The arclength function of $\partial K$ is continuously differentiable as a periodic function on the real line and its derivative is $\alpha$-H\"older continuous.} $C^{1,\alpha}$, $\alpha>1/2$. Then
\begin{equation}
\label{beta2}
\log Z_{N,s,2} = -N(N+1)I[\omega_K] +  \theta(\ell)N + \mathcal{O}(\log N),
\end{equation}
where $\theta(x)=\log\pi +1+x^{-1}(1-x)\log(1-x)$, which is a continuous increasing function on $[0,1]$ with values $\theta(0)=\log\pi$ and $\theta(1)=\log\pi+1$. When $s=\infty$, the term $\mathcal{O}(\log N)$ can be replaced by $\mathcal{O}(1)$.
\end{proposition}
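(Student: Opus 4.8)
The case $\beta=2$ is amenable to the orthogonal polynomial method, and the plan is to reduce \eqref{beta2} to sharp (Carleman-type) asymptotics for the Bergman polynomials of $K$. Writing $\prod_{m<n}|z_n-z_m|^2=\big|\det\big(z_k^{j-1}\big)_{j,k=1}^N\big|^2$ and applying Andr\'eief's identity to \eqref{ZNsbeta} gives
\begin{equation*}
Z_{N,s,2}=N!\,\det\!\left(\int_{\C}z^j\bar z^k e^{-2sg_K(z)}\,\mathrm dA(z)\right)_{\!\!j,k=0}^{\ \ N-1}=N!\prod_{k=0}^{N-1}h_k,
\end{equation*}
where $h_k$ is the squared norm in $L^2\!\big(e^{-2sg_K}\mathrm dA\big)$ of the monic polynomial of degree $k$ orthogonal there; the entries converge because $g_K(z)=\log|z|-\log\cp(K)+o(1)$ at infinity and $k\le N-1<s-1$. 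Since $N!$ cancels $\prod_{k=0}^{N-1}(k+1)$, it suffices to establish
\begin{equation*}
h_k=\pi\,\cp(K)^{2(k+1)}\,\frac{s}{(k+1)(s-k-1)}\,(1+\delta_k),\qquad \textstyle\sum_{k=0}^{N-1}|\delta_k|=\mathcal O(\log N),
\end{equation*}
with the sum being $\mathcal O(1)$ when $s=\infty$. The model $K=\overline{\D}$ shows what to aim for: rotational symmetry forces the orthogonal polynomials to be $z^k$, so $h_k=\pi/(k+1)+\pi/(s-k-1)$, whence $Z_{N,s,2}=\pi^N\prod_{j=1}^N(1-j/s)^{-1}$ exactly and Stirling already yields \eqref{beta2}.

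For general $K$, let $\phi$ map $\overline{\C}\setminus K$ conformally onto $\{|u|>1\}$ with $\phi(z)=z/\cp(K)+\mathcal O(1)$ at infinity, so $g_K=\log|\phi|$ off $K$ and $\cp(K)=1/\phi'(\infty)$; since $\partial K\in C^{1,\alpha}$, the inverse $\psi:=\phi^{-1}$ extends to $\{|u|\ge1\}$ with $\psi'\in C^{0,\alpha}$ nonvanishing (Kellogg--Warschawski). Split $h_k=\int_K|p|^2\,\mathrm dA+\int_{\C\setminus K}|p|^2|\phi|^{-2s}\,\mathrm dA$ at $p$ the $k$th monic orthogonal polynomial. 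The first term is $\ge\|P_k\|_{L^2(K)}^2$, where $P_k$ is the monic Bergman polynomial of $K$. For the second, the change of variables $z=\psi(u)$ turns it into $\int_{|u|>1}|F|^2|u|^{-2s}\,\mathrm dA$ with $F=(p\circ\psi)\psi'$ holomorphic in $\{|u|>1\}$, with a pole of order $k$ at infinity whose leading coefficient is $\cp(K)^{k+1}$ for every monic $p$; expanding $F=\sum_{i\le k}b_iu^i$ and using orthogonality of powers over annuli, $\int_{|u|>1}|F|^2|u|^{-2s}\,\mathrm dA=\sum_i|b_i|^2\pi/(s-i-1)\ge\pi\,\cp(K)^{2(k+1)}/(s-k-1)$. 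As both lower bounds hold for every monic $p$, $h_k\ge\|P_k\|_{L^2(K)}^2+\pi\,\cp(K)^{2(k+1)}/(s-k-1)$. The matching upper bound comes from evaluating the split at $p=P_k$: the interior term is then exactly $\|P_k\|_{L^2(K)}^2$, and the exterior one is $\le\big(\pi/(s-k-1)\big)\cdot\tfrac1{2\pi}\int_{\partial K}|P_k|^2\,|\psi'\circ\phi|\,|\mathrm dz|$ (here $1/(s-i-1)\le 1/(s-k-1)$ for all $i\le k$ and Parseval identifies the sum $\sum_i|c_i|^2$ of squared Laurent coefficients of $(P_k\circ\psi)\psi'$ with that boundary integral).

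The crux is the strong, Carleman-type asymptotics $P_k(z)=\cp(K)^{k+1}\phi(z)^k\phi'(z)\big(1+r_k(z)\big)$ uniformly on $\overline{\C}\setminus K^\circ$, which yield at once $\|P_k\|_{L^2(K)}^2=\tfrac{\pi}{k+1}\cp(K)^{2(k+1)}(1+\eta_k)$ and $\tfrac1{2\pi}\int_{\partial K}|P_k|^2\,|\psi'\circ\phi|\,|\mathrm dz|=\cp(K)^{2(k+1)}(1+\eta_k')$. The hypothesis $\alpha>1/2$ is precisely what is needed here: it makes $\psi'\in C^{0,\alpha}$, which forces the series governing $r_k$ (in effect the Fourier series of $\log\psi'$) to be summable, whence $\sum_{k\ge0}(|\eta_k|+|\eta_k'|)<\infty$. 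Combining with the two-sided bound above and $\tfrac1{k+1}+\tfrac1{s-k-1}=\tfrac{s}{(k+1)(s-k-1)}$ gives the required asymptotics for $h_k$ with $|\delta_k|\le|\eta_k|+|\eta_k'|$ --- a \emph{summable} error. Supplying these asymptotics (or invoking them in exactly this form for $C^{1,\alpha}$ domains) is where the real difficulty lies; everything around it is soft.

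Finally, $\log Z_{N,s,2}=\log N!+\sum_{k=0}^{N-1}\log h_k=N(N+1)\log\cp(K)+N\log\pi-\sum_{j=1}^N\log(1-j/s)+\mathcal O(1)$ after $\log N!$ cancels $\sum_k\log(k+1)$. By \eqref{capacity}, $\log\cp(K)=-I[\omega_K]$, and Stirling's formula applied to $\prod_{j=1}^N(1-j/s)=\Gamma(s)/\big(s^N\Gamma(s-N)\big)$ gives $\sum_{j=1}^N\log(1-j/s)=\big(s-N-\tfrac12\big)\log\tfrac{s}{s-N}-N+\mathcal O\big((s-N)^{-1}\big)$, so that $N\log\pi-\sum_{j=1}^N\log(1-j/s)=\theta(N/s)\,N+\tfrac12\log\tfrac{s}{s-N}+\mathcal O(1)=\theta(N/s)\,N+\mathcal O(\log N)$, using $s-N>c_0/2$ from \eqref{snbeta}. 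This is \eqref{beta2} with $\theta(N/s)$ in place of $\theta(\ell)$, the two being interchangeable to the stated precision under $N/s\to\ell$. When $s=\infty$ the exterior integral is absent, $h_k=\|P_k\|_{L^2(K)}^2$, there is no Stirling remainder, and the error improves to $\mathcal O(1)$.
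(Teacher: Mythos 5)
Your overall route is the same as the paper's: both arguments convert $Z_{N,s,2}$ into $N!\prod_{k=0}^{N-1}h_k$, where $h_k=\varkappa_{k,s}^{-2}$ is the squared norm of the monic degree-$k$ polynomial orthogonal with respect to $e^{-2sg_K}\mathrm{d}A$ (the paper quotes this from \cite[Sec.~5.4]{Deift}; your Andr\'eief derivation is equivalent), and both reduce \eqref{beta2} to the asymptotics $h_k=\pi\,\cp(K)^{2(k+1)}\frac{s}{(k+1)(s-k-1)}(1+\delta_k)$ with $\sum_k|\delta_k|$ bounded, finishing with the same Stirling computation (your use of $s-N>c_0/2$ from \eqref{snbeta} to absorb $\tfrac12\log\tfrac{s}{s-N}$ into $\mathcal{O}(\log N)$ matches the paper). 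The difference is how that formula for $h_k$ is obtained: the paper simply cites \cite[Thm.~1]{SinYa12}, which gives $\varkappa_{n,s}$ directly with relative error $\mathcal{O}(n^{-2\alpha})$ --- summable precisely because $\alpha>1/2$ --- whereas you re-derive the factor $1-(k+1)/s$ by splitting the weighted norm into interior and exterior parts, transplanting the exterior part to $\{|u|>1\}$ and using orthogonality of powers together with Parseval (legitimate, since $\psi'$ extends continuously and nonvanishing by Kellogg--Warschawski). That two-sided bound is correct and is an attractive, essentially elementary reconstruction of the mechanism behind the cited theorem.

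The genuine gap is the step you yourself flag as the crux: the strong asymptotics $P_k=\cp(K)^{k+1}\phi^k\phi'(1+r_k)$ uniformly on $\overline{\C}\setminus K^\circ$ with errors $\eta_k,\eta_k'$ \emph{summable} in $k$ are assumed, not proved and not precisely cited. Carleman's theorem requires analytic boundary, and for $C^{1,\alpha}$ boundaries the assertion that ``$\psi'\in C^{0,\alpha}$ forces summability'' is not an argument: the naive rate one extracts from Hölder continuity is of order $k^{-\alpha}$, and $\sum_k k^{-\alpha}$ diverges for $\alpha<1$, so nothing short of the quadratically improved rate $\mathcal{O}(k^{-2\alpha})$ for the norm/leading-coefficient and boundary-integral errors will do. Establishing exactly that rate for $C^{1,\alpha}$, $\alpha>1/2$, is the substantive content of \cite[Thm.~1]{SinYa12}, which is what the paper invokes. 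Your proof becomes complete if, at that point, you cite a result in precisely this quantitative form (or prove it); as written, the $\theta(\ell)N$ term in \eqref{beta2}, and in particular the $\mathcal{O}(1)$ claim for $s=\infty$, rest on an unproved input, while everything surrounding it --- the disk computation, the two-sided bound, the cancellation of $N!$, and the Stirling estimate --- is sound.
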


\subsection{Linear Statistics}

The $n$-th marginal probability of $\Omega_{N,s,\beta}$, $n\in\{1,\ldots,N-1\}$, is defined by
\begin{equation}
\Omega^{(n)}_{N,s,\beta}(z_1,\ldots,z_n) := \int_{\C^{N-n}} \Omega_{N,s,\beta}(z_1,\ldots,z_N) \, \mathrm{d}A^{\otimes (N-n)}(z_{n+1},\ldots,z_N).
\end{equation}
Let $\boldsymbol\eta$ be a random configuration chosen according to the law $\Omega_{N,s,\beta}$ and $\omega_{\boldsymbol\eta}$ be the corresponding empirical measure. Then $\omega_{\boldsymbol\eta}$ can be considered as a point process on $\C$. It is known that the joint intensities of this point process are equal to $\big(N!/(N-n)!\big)\Omega^{(n)}_{N,s,\beta}$. That is, if $O_1,\ldots,O_n$ are mutually disjoint subsets of $\C$, then
\[
\e\left[\prod_{k=1}^n\omega_{\boldsymbol\eta}(O_k)\right] = \frac{N!}{(N-n)!}\int_{O_1\times\cdots\times O_n}\Omega^{(n)}_{N,s,\beta}\mathrm{d}A^{\otimes n},
\]
where $\e[\cdot]$ denotes the expected value of a random variable. The following theorem describes the weak$^*$ behavior of the measures $\Omega^{(n)}_{N,s,\beta}\mathrm{d}A^{\otimes n}$ as $N\to\infty$.

\begin{theorem}
\label{thm:cws}
Under the conditions of Theorem~\ref{thm:partition}, it holds that
\[
\lim_{N\to\infty}\int_{\C^n} f\Omega^{(n)}_{N,s,\beta}\mathrm{d}A^{\otimes n} = \int f\mathrm{d}\omega_K^{\otimes n}
\]
for each $f\in\mathrm{C}_b(\C^n)$, $n\in\N$, where $\mathrm{C}_b(\C^n)$ is the Banach space of bounded continuous functions on $\C^n$.
\end{theorem}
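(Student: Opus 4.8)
The plan is to derive Theorem~\ref{thm:cws} from the large deviation principle of Theorem~\ref{thm:partition} by passing through the convergence of the empirical measure $\omega_{\boldsymbol\eta}$ to $\omega_K$ in probability. Since $\Omega^{(n)}_{N,s,\beta}$ is exactly the law of $(\eta_1,\ldots,\eta_n)$, the integral $\int_{\C^n}f\,\Omega^{(n)}_{N,s,\beta}\,\mathrm{d}A^{\otimes n}$ equals $\e[f(\eta_1,\ldots,\eta_n)]$, and by a Stone--Weierstrass argument (carried out on a large polydisc, with the tail controlled by $\|f\|_\infty$ once the confinement estimate below is in hand) it suffices to treat product test functions $f=f_1\otimes\cdots\otimes f_n$, $f_j\in\mathrm{C}_b(\C)$. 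For such $f$, exchangeability of the coordinates under $\Omega_{N,s,\beta}$ gives, after taking expectations,
\[
\e\Big[\prod_{j=1}^n\int f_j\,\mathrm{d}\omega_{\boldsymbol\eta}\Big]=\frac{1}{N^n}\sum_{k_1,\ldots,k_n=1}^N\e\Big[\prod_{j=1}^nf_j(\eta_{k_j})\Big]=\frac{N!/(N-n)!}{N^n}\,\e\Big[\prod_{j=1}^nf_j(\eta_j)\Big]+\mathcal{O}(1/N),
\]
the error term collecting the $\mathcal{O}(N^{n-1})$ tuples with a repeated index, each bounded by $\prod_j\|f_j\|_\infty$. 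Since $N!/((N-n)!\,N^n)\to1$, the theorem reduces to showing $\e[\prod_j\int f_j\,\mathrm{d}\omega_{\boldsymbol\eta}]\to\prod_j\int f_j\,\mathrm{d}\omega_K$.

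Next I would establish the two ingredients that yield $\omega_{\boldsymbol\eta}\to\omega_K$ in probability in the weak$^*$ topology. The first is a \emph{confinement estimate}: because $g_K(z)=\log|z|-\log\cp(K)+o(1)$ as $z\to\infty$ and \eqref{snbeta} holds, the portion of $Z_{N,s,\beta}$ carried by configurations having a particle outside a fixed disc $\overline{D(0,R)}$ is negligible relative to $Z_{N,s,\beta}$, so that $\prob\{\boldsymbol\eta\not\subset\overline{D(0,R)}\}\le e^{-c(R)N^2}$ for $R$ large --- this is precisely the kind of bound already used to prove \eqref{partitionlimit}. The second is the \emph{uniqueness of the minimizer}: by Proposition~\ref{prop:minenergy}, $I_\ell[\nu]>I[\omega_K]$ for every compactly supported $\nu\ne\omega_K$. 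Now fix $R$ as above and let $\mathcal{M}_R$ be the (weak$^*$ compact, metrizable) set of probability measures supported in $\overline{D(0,R)}$, on which $\dist(\cdot,\cdot)$ induces the topology. For $\epsilon>0$ the set $F_\epsilon=\{\nu\in\mathcal{M}_R:\dist(\nu,\omega_K)\ge\epsilon\}$ is compact and omits $\omega_K$, so each $\nu\in F_\epsilon$, having $I_\ell[\nu]>I[\omega_K]$, admits by \eqref{largedeviation} a radius $\delta_\nu>0$ with $\limsup_{N\to\infty}N^{-2}\log\prob\{\dist(\nu,\omega_{\boldsymbol\eta})<\delta_\nu\}<0$ (when $I_\ell[\nu]=\infty$ the right side of \eqref{largedeviation} is $-\infty$, so this still holds). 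Covering $F_\epsilon$ by finitely many balls $B(\nu_i,\delta_{\nu_i})$ and writing $\{\dist(\omega_{\boldsymbol\eta},\omega_K)\ge\epsilon\}\subset\{\boldsymbol\eta\not\subset\overline{D(0,R)}\}\cup\bigcup_i\{\dist(\nu_i,\omega_{\boldsymbol\eta})<\delta_{\nu_i}\}$ gives $\prob\{\dist(\omega_{\boldsymbol\eta},\omega_K)\ge\epsilon\}\to0$.

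To finish, for each $f_j\in\mathrm{C}_b(\C)$ one has $\int f_j\,\mathrm{d}\omega_{\boldsymbol\eta}\to\int f_j\,\mathrm{d}\omega_K$ in probability: on the confinement event all relevant measures are supported in a common compact set, where $f_j$ is bounded and uniformly approximable by Lipschitz functions, so the difference is controlled by $\dist(\omega_{\boldsymbol\eta},\omega_K)$; off that event, whose probability tends to $0$, the integral stays bounded by $\|f_j\|_\infty$. Since these integrals are uniformly bounded, the bounded convergence theorem yields $\e[\prod_j\int f_j\,\mathrm{d}\omega_{\boldsymbol\eta}]\to\prod_j\int f_j\,\mathrm{d}\omega_K$, which by the first paragraph proves the claim for product $f$ and hence, via Stone--Weierstrass together with the confinement estimate, for all $f\in\mathrm{C}_b(\C^n)$.

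\textbf{Main obstacle.} Most of the argument is soft; the step that genuinely needs care is the passage from the double-limit statement \eqref{largedeviation} to the honest bound $\prob\{\dist(\omega_{\boldsymbol\eta},\omega_K)\ge\epsilon\}\to0$. This requires the confinement estimate to compactify the space of relevant measures (so that a finite subcover exists) and the strict inequality of Proposition~\ref{prop:minenergy} to ensure every competitor measure has a neighborhood of exponentially small probability; extracting the confinement bound with the right uniformity from the analysis behind Theorem~\ref{thm:partition} is where the work lies.
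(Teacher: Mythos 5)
Your overall strategy (reduce to product test functions, use exchangeability, and deduce the result from convergence in probability of $\omega_{\boldsymbol\eta}$ to $\omega_K$ via the LDP upper bound plus Proposition~\ref{prop:minenergy}) is reasonable, but it hinges entirely on the confinement estimate $\prob\{\boldsymbol\eta\not\subset\overline{D(0,R)}\}\le e^{-c(R)N^2}$ for a \emph{fixed} large $R$, and this is a genuine gap: it is not "the kind of bound already used to prove \eqref{partitionlimit}", and in the stated form it is false under the sole hypothesis \eqref{snbeta}. The bounds behind \eqref{partitionlimit} control $Z_{N,s,\beta}$ and $\delta_N(w_K)$ only up to factors $e^{o(N^2)}$, and any gain you extract from forcing one coordinate outside $D(0,R)$ is of size $w_K^{\beta(s-N+1)}(z)\le w_K^{2+c_0}(z)\sim R^{-(2+c_0)}$, i.e.\ independent of $N$; the $e^{o(N^2)}$ slack swallows it. More fundamentally, since $s-N$ may remain bounded (e.g.\ $\beta(s-N+1)=2+c_0$), the exponential cost of sending a \emph{single} particle to modulus $R$ is only $\beta(s-N+1)\log R+\mathcal{O}(N)$, so such excursions are at worst $e^{-\mathcal{O}(N)}$-rare, never $e^{-c(R)N^2}$-rare; in this weakly confining regime the outermost particle is in fact known to have a nondegenerate heavy-tailed limit law, so $\prob\{\boldsymbol\eta\subset\overline{D(0,R)}\}$ need not even tend to $1$ for fixed $R$. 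Without this compactification both your finite-cover argument on $\mathcal{M}_R$ and your Stone--Weierstrass/tail reduction lose their footing, and repairing it (a tightness bound for the maximal modulus, uniform in $N$) is a real piece of work that your proposal does not supply.

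The paper circumvents confinement altogether, and the contrast is instructive. It truncates not on a spatial event but on the energy sublevel set $U_{N,\varepsilon}$ of \eqref{UNepsilon}: Lemma~\ref{lem:lowenergy} shows its complement has probability $e^{-\beta(\varepsilon+o(1))N(N-1)/2}$, because the weight $w_K^{N-1}$ attached to each coordinate absorbs far-away particles without ever requiring them to stay in a fixed disc. Then, for $f\in\mathrm{C}_c(\C^n)$, the symmetrization $f_N$ of \eqref{fN} is shown (Lemmas~\ref{lem:fN} and~\ref{lem:cws}) to converge to $\int f\,\mathrm{d}\omega_K^{\otimes n}$ uniformly on $U_{N,\varepsilon}$ --- here it is the compact support of $f$, not confinement of the particles, that provides the compactness needed to extract weak$^*$ limits, and the identification of the limit uses only Lemma~\ref{lem:descent} and Proposition~\ref{prop:minenergy} (the full LDP \eqref{largedeviation} is never invoked). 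Finally, the passage from $\mathrm{C}_c$ to $\mathrm{C}_b$ is done by proving tightness of the $n$-point marginal $\Omega^{(n)}_{N,s,\beta}\mathrm{d}A^{\otimes n}$ \emph{from the $\mathrm{C}_c$ case itself} (test against a compactly supported $f$ with $f\equiv1$ on $K^n$), which is a much weaker statement than confinement of all $N$ particles and is exactly what the theorem needs. If you want to salvage your route, you should replace the global confinement event by either this marginal-tightness argument or the energy truncation $U_{N,\varepsilon}$.
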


Theorem~\ref{thm:cws} in particular implies that
\[
\lim_{N\to\infty}\e\left[\int f\mathrm{d}\omega_{\boldsymbol\eta_N}\right] = \int f\mathrm{d}\omega_K, \quad f\in\mathrm{C}_b(\C),
\]
where $\boldsymbol\eta_N=\left\{\eta_1^N,\ldots,\eta_N^N\right\}$ is a random configuration chosen according to the law $\Omega_{N,s,\beta}$ since
\[
\e\left[f\left(\eta_j^N\right)\right]=\int f\Omega^{(1)}_{N,s,\beta}\mathrm{d}A
\]
for any $j\in\{1,\ldots,N\}$. More generally, let $\{j_1,\ldots,j_n\}\subseteq\{1,\ldots,N\}$ be a set of distinct indices and $f_1,\ldots,f_n\in\mathrm{C}_b(\C)$. Set $f(z_1,\ldots,z_n):=\prod_{k=1}^nf_k(z_k)$. Then $f\in\mathrm{C}_b(\C^n)$ and it holds that
\[
\e\left[f\left(\eta_{j_1}^N,\ldots,\eta_{j_n}^N\right)\right]=\int f\Omega^{(n)}_{N,s,\beta}\mathrm{d}A^{\otimes n} + \sum_{k=1}^{n-1} \left(\sum \int f\Omega^{(k)}_{N,s,\beta}\mathrm{d}A^{\otimes k}\right),
\]
where the inner sum is taken over all possible combinations of $n-k+1$ coordinates being equal. Observe that the first integral on the right-hand side of the equality above converges to $\prod_{k=1}^n\int f_k\mathrm{d}\omega_K$ as $N\to\infty$ by Theorem~\ref{thm:cws}. This observation yields the following corollary to Theorem~\ref{thm:cws}.

\begin{corollary}
Let $f\in\mathrm{C}_b(\C)$ and $k,m\in\N$. Under conditions of Theorem~\ref{thm:partition}, it holds that
\[
\lim_{N\to\infty} \e\left[\left(\int f\mathrm{d}\omega_{\boldsymbol\eta_N}\right)^k\left(\int \bar f\mathrm{d}\omega_{\boldsymbol\eta_N}\right)^m\right] = \left(\int f\mathrm{d}\omega_K\right)^k\left(\int \bar f\mathrm{d}\omega_K\right)^m.
\]
\end{corollary}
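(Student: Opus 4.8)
The plan is to deduce the corollary directly from Theorem~\ref{thm:cws} by collapsing the two powers into a single integral over $\C^n$, with $n:=k+m$, against a bounded continuous test function, and then showing that the ``diagonal'' part of that integral is negligible. Concretely, set
\[
F(z_1,\ldots,z_n):=\prod_{j=1}^{k}f(z_j)\prod_{j=k+1}^{n}\bar f(z_j);
\]
since $f\in\mathrm{C}_b(\C)$ we have $F\in\mathrm{C}_b(\C^n)$ with $\|F\|_\infty\le\|f\|_\infty^{\,n}$. Writing the empirical measure as $\omega_{\boldsymbol\eta_N}=N^{-1}\sum_{l=1}^{N}\delta_{\eta_l^N}$, one has the exact identity
\[
\left(\int f\,\mathrm{d}\omega_{\boldsymbol\eta_N}\right)^{k}\!\left(\int\bar f\,\mathrm{d}\omega_{\boldsymbol\eta_N}\right)^{m}=\int_{\C^n}F\,\mathrm{d}\omega_{\boldsymbol\eta_N}^{\otimes n}=\frac1{N^n}\sum_{l_1,\ldots,l_n=1}^{N}F\!\left(\eta_{l_1}^N,\ldots,\eta_{l_n}^N\right),
\]
so the whole question reduces to the behaviour of the expectation of this multi-sum.

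First I would split the multi-sum into the part $\Sigma_N'$ over $n$-tuples $(l_1,\ldots,l_n)$ of pairwise distinct indices and the remainder $\Sigma_N''$ over tuples in which some index is repeated. For $N>n$ there are $N^n-N!/(N-n)!=\mathcal{O}(N^{n-1})$ tuples of the second kind, and since $|F|\le\|f\|_\infty^{\,n}$ everywhere, $N^{-n}\,\bigl|\e[\Sigma_N'']\bigr|\le\|f\|_\infty^{\,n}\bigl(1-N!/((N-n)!\,N^n)\bigr)=\mathcal{O}(1/N)$, which tends to $0$. For $\Sigma_N'$ I would use the permutation symmetry of $\Omega_{N,s,\beta}$: for every fixed tuple of distinct indices the vector $\bigl(\eta_{l_1}^N,\ldots,\eta_{l_n}^N\bigr)$ has probability density $\Omega^{(n)}_{N,s,\beta}$ with respect to $\mathrm{d}A^{\otimes n}$ (this is just the definition of the $n$-th marginal together with symmetry, equivalently the joint-intensity identity recorded before the corollary), so $\e\bigl[F(\eta_{l_1}^N,\ldots,\eta_{l_n}^N)\bigr]=\int_{\C^n}F\,\Omega^{(n)}_{N,s,\beta}\,\mathrm{d}A^{\otimes n}$ for each of the $N!/(N-n)!$ such tuples, and therefore
\[
\frac1{N^n}\,\e[\Sigma_N']=\frac{N!}{(N-n)!\,N^n}\int_{\C^n}F\,\Omega^{(n)}_{N,s,\beta}\,\mathrm{d}A^{\otimes n}.
\]
Since $F\in\mathrm{C}_b(\C^n)$, Theorem~\ref{thm:cws} gives $\int_{\C^n}F\,\Omega^{(n)}_{N,s,\beta}\,\mathrm{d}A^{\otimes n}\to\int F\,\mathrm{d}\omega_K^{\otimes n}=\bigl(\int f\,\mathrm{d}\omega_K\bigr)^{k}\bigl(\int\bar f\,\mathrm{d}\omega_K\bigr)^{m}$, a finite number, while $N!/((N-n)!\,N^n)\to1$; combining this with the vanishing of $N^{-n}\e[\Sigma_N'']$ yields the assertion.

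I do not anticipate a genuine obstacle: the statement is essentially bookkeeping built on Theorem~\ref{thm:cws}. The point that must be handled with some care is the estimate on the diagonal tuples $\Sigma_N''$, and it is exactly here that the hypothesis $f\in\mathrm{C}_b(\C)$ is used, through the uniform bound $\|F\|_\infty\le\|f\|_\infty^{\,n}$; for a merely continuous, possibly unbounded $f$ one would instead need integrability or tail control for the marginals $\Omega^{(n)}_{N,s,\beta}$, which is not among the present results. A minor additional check is that the complex-valued function $F$ assembled from $f$ and $\bar f$ does lie in $\mathrm{C}_b(\C^n)$ (apply Theorem~\ref{thm:cws} separately to $\re F$ and $\im F$ if a real-valued formulation is preferred), but this is immediate.
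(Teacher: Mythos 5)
Your argument is correct and takes essentially the same route as the paper: expand the mixed moment over index tuples, identify the contribution of the pairwise-distinct tuples with $\frac{N!}{(N-n)!\,N^n}\int_{\C^n}F\,\Omega^{(n)}_{N,s,\beta}\,\mathrm{d}A^{\otimes n}$ and invoke Theorem~\ref{thm:cws}, and show the coincidence terms are negligible. The only cosmetic difference is that you bound the diagonal tuples directly by $\|f\|_\infty^{\,n}\,\mathcal{O}(1/N)$ via counting, whereas the paper records them as integrals against the lower-order marginals $\Omega^{(k)}_{N,s,\beta}$; the two observations amount to the same thing.
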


\section{Proofs}

\begin{proof}[Proof of Proposition~\ref{prop:minenergy}]
Let $\nu$ be such that $\dist(\nu,\omega_K)>0$. Without loss of generality we may assume that $\nu$ has finite energy. Recall that $g_K\equiv0$ on $K$. Thus, if $\supp(\nu)\subseteq K$, then $I_\ell[\nu]=I[\nu]>I[\omega_K]$ by the very definition of $\omega_K$. Otherwise, consider
\[
\widehat\nu := \nu_{|K}+\widehat\nu_{|K^c},
\]
where $\widehat\nu_{|K^c}$ is the balayage of $\nu_{|K^c}$ onto $K$ relative to $K^c$, \cite[Sec.~II.4]{SaffTotik}. Then it follows from \cite[Thm.~II.4.7]{SaffTotik} that $\widehat\nu$ has finite energy as well and $V^{\widehat\nu} < V^\nu + \int g_K\mathrm{d}\nu$
in $K^c$. Integrating both sides of this inequality against $\nu$ and using Fubini-Tonelli's theorem for the left-hand side, we get that
\[
\int V^\nu\mathrm{d}\widehat\nu < I[\nu] + \int g_K\mathrm{d}\nu.
\]
In fact, it also true that $V^{\widehat\nu} = V^\nu + \int g_K\mathrm{d}\nu$ everywhere on $K$ as $K$ is regular with respect to the Dirichlet problem, \cite[Sec.~II.4]{SaffTotik}. Therefore,
\[
I[\widehat\nu] < I[\nu] + 2\int g_K\mathrm{d}\nu = I_1[\nu] \leq I_\ell[\nu].
\]
The desired conclusion now follows from the fact that $\supp(\widehat\nu)\subseteq K$ and therefore $I[\omega_K]\leq I[\widehat\nu]$.
\end{proof}

Put, for brevity, $w_K:=e^{-g_K}$ and define
\begin{equation}
\label{deltaNK}
\delta_N(w_K) := \sup\left\{\prod_{m < n} |z_n - z_m|\prod_n w_K^{N-1}(z_n):~(z_1,\ldots,z_N)\in \C^N\right\}.
\end{equation}
\begin{lemma}
\label{lem:fekete}
Let $\{\lambda_1,\ldots,\lambda_N\}$ be any configuration satisfying
\[
\delta_N(w_K) = \prod_{m < n} |\lambda_n - \lambda_m | \prod_n w_K^{N-1}(\lambda_n),
\]
then $\{\lambda_1,\ldots,\lambda_N\}\subset K$.
\end{lemma}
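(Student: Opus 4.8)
The plan is to treat $\{\lambda_1,\ldots,\lambda_N\}$ as a weighted Fekete configuration for the weight $w_K=e^{-g_K}$ and to read off consequences of maximality one node at a time; throughout I assume $N\ge 2$ (for $N=1$ the quantity $\delta_1(w_K)$ is identically $1$ and the statement is degenerate). A maximizing configuration necessarily consists of $N$ distinct points of $\C$: if two of them coincided the Vandermonde factor would vanish, giving the value $0$, whereas $\delta_N(w_K)>0$ because $N$ distinct points placed on $K$ --- which is infinite, being non-polar, and on which $w_K\equiv 1$ --- give a strictly positive value. Fix an index $k$, set $q_k(z):=\prod_{j\ne k}(z-\lambda_j)$ (a monic polynomial of degree $N-1$), and note that freezing all nodes but the $k$-th shows $\lambda_k$ to be a global maximizer on $\C$ of $\phi_k(z):=|q_k(z)|\,w_K(z)^{N-1}=|q_k(z)|\,e^{-(N-1)g_K(z)}$.

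\emph{Step 1: a Bernstein--Walsh-type estimate.} I would first prove that $\max_{\C}\phi_k=\max_K|q_k|$, with the maximum attained on $K$. On $K$ this is immediate because $g_K\equiv 0$ there ($K$ is regular), so $\phi_k=|q_k|$ on $K$. On $K^c$ the function $\log\phi_k=\log|q_k|-(N-1)g_K$ is subharmonic (subharmonic plus harmonic); it is bounded above near $\infty$, since the logarithmic pole of $g_K$ exactly cancels the degree of $q_k$, so that $\infty$ is a removable singularity and $\log\phi_k$ is subharmonic on the connected open set $K^c\subseteq\overline\C$; and its boundary data along $\partial K$ coincide with those of $\log|q_k|$ (because $g_K\to 0$ there), hence are $\le\log\max_K|q_k|$. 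The maximum principle then yields $\phi_k\le\max_K|q_k|$ on $K^c$ as well, so indeed $\phi_k(\lambda_k)=\max_K|q_k|$.

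\emph{Step 2: rigidity, and the conclusion.} Assume for contradiction that some node lies outside $K$, say $\lambda_1\notin K$. Then $\lambda_1$ is an interior point of $K^c$ at which the subharmonic function $\log\phi_1$ attains its global maximum $\log\max_K|q_1|$, so the strong maximum principle forces $\log\phi_1$ to be constant on $K^c$; equivalently, $|q_1(z)|=\bigl(\max_K|q_1|\bigr)e^{(N-1)g_K(z)}$ for every $z\in K^c$. The right-hand side is finite and at least $\max_K|q_1|>0$ (a nonzero polynomial cannot vanish identically on the infinite set $K$), so $q_1$ has no zero in $K^c$; letting $z$ tend to an arbitrary point of $\partial K$, where $g_K$ is continuous and vanishes, shows $q_1$ has no zero on $\partial K$ either. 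Thus $\lambda_2,\ldots,\lambda_N$, the zeros of $q_1$, all lie in the interior $K^\circ$. But then Step 1 applied with $k=2$ gives $|q_2(\lambda_2)|=\phi_2(\lambda_2)=\max_K|q_2|$ with $\lambda_2\in K^\circ$, so $q_2$ attains its maximum modulus over the open set $K^\circ$ at an interior point; by the maximum modulus principle $q_2$ is constant, contradicting $\deg q_2=N-1\ge 1$. Hence all nodes lie in $K$.

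\emph{Main obstacle.} The crux is Step 2: converting the mere hypothesis $\lambda_1\notin K$ into the far stronger statement that \emph{all} the other nodes are interior to $K$. This is where the maximum principle has to be used in its equality (rigidity) form, and where one must handle with care both the point at infinity --- an interior point of $K^c$ at which $\log\phi_1$ has a removable singularity --- and the regularity of $K$, which is exactly what makes $g_K$ continuous up to $\partial K$ with $g_K|_{\partial K}=0$, so that the boundary data of $\log\phi_1$ are genuinely those of $\log|q_1|$. After Step 2 the proof is a one-line application of the maximum modulus principle, and the argument needs only that $K$ is compact, non-polar, regular, and has connected complement --- not the extra hypothesis $K=\overline{K^\circ}$ of Theorem~\ref{thm:partition}.
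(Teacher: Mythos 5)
Your proof is correct, and its skeleton is the paper's: freezing all nodes but the $k$-th shows that $\lambda_k$ is a global maximizer of the weighted polynomial $\phi_k=|q_k|\,w_K^{N-1}$ (your positivity remark, needed to divide out the frozen factors, is right), and your Step 1 is a hands-on derivation of the Bernstein--Walsh bound $\phi_k\le\|q_k\|_K$ off $K$, which the paper instead cites from Saff--Totik. The genuine divergence is Step 2. The paper concludes in one stroke by asserting the \emph{strict} inequality $|P_{k,N}(z)|<\|P_{k,N}\|_K$ for $z\in K^c$, so an exterior maximizer is impossible; but for an arbitrary polynomial of degree $N-1$ the strict form can fail (for $K=\overline\D$ and $q(z)=z^{N-1}$ one has $|q(z)|e^{-(N-1)g_K(z)}\equiv\|q\|_K$ off $K$), so the equality case must be excluded using the extremality of the whole configuration, and that is precisely what your rigidity argument supplies: equality at an exterior node forces $\log\phi_1$ to be constant on the connected open set $K^c\cup\{\infty\}\subset\overline\C$, hence $q_1$ has no zeros on $K^c\cup\partial K$ and all remaining nodes lie in $K^\circ$ (when $K^\circ=\varnothing$ this is already a contradiction since $q_1$ has $N-1\ge1$ zeros --- worth saying explicitly, as the lemma does not assume $K=\overline{K^\circ}$), after which the maximum modulus principle applied to $q_2$ at the interior point $\lambda_2$ finishes the proof. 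So your route is somewhat longer but self-contained, makes visible exactly where the joint maximality of the configuration (not just of one node) enters, and correctly isolates the delicate strictness point that the paper's citation absorbs; the paper's route buys brevity by importing the weighted Bernstein--Walsh statement as a black box.
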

\begin{proof}
Set
\[
P_{k,N}(z) := w_K^{N-1}(z)\prod_{n\neq k}(z-\lambda_n).
\]
Then $\|P_{k,N}\|_\C = |P_{k,N}(\lambda_k)|$ for each $k\in\{1,\ldots,N\}$ since
\[
\delta_N(w_K) = \|P_{k,N}\|_\C\prod_{m < n,~n,m\neq k} |\lambda_n - \lambda_m | \prod_{n\neq k} w_K^N(\lambda_n).
\]
As $\deg(P_{k,N})=N-1$, $w_K\equiv1$ on $K$, and $w_K(z)<1$ for $z\in K^c$, Bernstein-Walsh inequality \cite[Thm.~III.2.1]{SaffTotik} yields that
\[
|P_{k,N}(z)| < \|P_{k,N}\|_K \leq \|P_{k,N}\|_\C, \quad z\in K^c.
\] 
Hence, $P_{k,N}$ attains its maximum on $K$ and therefore $\lambda_k\in K$ for all $k\in\{1,\ldots,N\}$. 
\end{proof}

Let $E$ be a compact set satisfying all the conditions of Theorem~\ref{thm:partition}. Define
\begin{equation}
\label{Em}
E_m := \overline{\left\{z\in E\big|~\dist(z,\partial E)>\frac1m\right\}}, \quad m\in\N.
\end{equation}
\begin{lemma}
\label{lem:Em}
Let $E$ be as described and $\nu$ be a probability Borel measure supported on $E$ with finite energy. Then there exists a sequence of probability Borel measures $\{\nu_m\}$ such that $\supp(\nu_m)\subseteq E_m$, $\dist\left(\nu_m,\nu\right)\to0$ and $I[\nu_m]\to I[\nu]$ as $m\to\infty$. Moreover, if $\nu=\omega_E$, then $\nu_m$ can be taken $\omega_{E_m}$.
\end{lemma}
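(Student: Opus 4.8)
The plan is to let $\nu_m$ be, uniformly in $\nu$, the balayage of $\nu$ onto $E_m$ relative to $E_m^c$, i.e.\ $\nu_m:=\nu_{|E_m}+\widehat{\nu_{|E_m^c}}$ in the notation of the proof of Proposition~\ref{prop:minenergy}, defined for $m$ so large that $E_m$ is non-polar (which holds once $E_m$ contains the non-empty open set $\{z\in E:\dist(z,\partial E)>1/m\}$; note also that each $E_m$ is a compact subset of $E^\circ$). By \cite[Sec.~II.4, Thm.~II.4.7]{SaffTotik}, $\nu_m$ is a probability Borel measure of finite energy with $\supp(\nu_m)\subseteq E_m$. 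Arguing exactly as in the proof of Proposition~\ref{prop:minenergy} with $K$ replaced by $E_m$ and writing $c_m:=\int g_{E_m}\,\mathrm{d}\nu\geq0$, one gets $V^{\nu_m}=V^\nu+c_m$ quasi everywhere on $E_m$ and $V^{\nu_m}\leq V^\nu+c_m$ everywhere on $\C$. In particular, when $\nu=\omega_E$ the relation $V^{\omega_E}=I[\omega_E]$ q.e.\ on $E$ shows $V^{\nu_m}$ is q.e.\ constant on $E_m$ and at most that constant on $\C$, so $\nu_m=\omega_{E_m}$ by the Frostman characterisation of the equilibrium measure; this settles the ``moreover'' part.

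These two relations also give $I[\nu_m]=\int V^{\nu_m}\,\mathrm{d}\nu_m=c_m+\int V^\nu\,\mathrm{d}\nu_m=c_m+\int V^{\nu_m}\,\mathrm{d}\nu\leq I[\nu]+2c_m$, where the middle steps use Fubini--Tonelli (the logarithmic kernel is bounded below on the compact $E$) and the fact that the finite-energy measure $\nu_m$ does not charge polar sets, and where $I[\nu]<\infty$. Hence it suffices to prove that $c_m\to0$ and that $\nu_m\cws\nu$: the first yields $\limsup_m I[\nu_m]\leq I[\nu]$, the second yields $\liminf_m I[\nu_m]\geq I[\nu]$ by weak$^*$ lower semicontinuity of the energy and also gives $\dist(\nu_m,\nu)\to0$, since all the measures are supported on the fixed compact $E$.

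For $c_m\to0$, rewrite $c_m=\int g_{E_m}\,\mathrm{d}\nu=I[\omega_{E_m}]-\int V^{\omega_{E_m}}\,\mathrm{d}\nu=I[\omega_{E_m}]-\int V^\nu\,\mathrm{d}\omega_{E_m}$. The input I expect to require the most care is the convergence $\cp(E_m)\uparrow\cp(E)$, equivalently $I[\omega_{E_m}]\to I[\omega_E]$: the open ``cores'' $\{z\in E:\dist(z,\partial E)>1/m\}$ increase to $E^\circ$, so Choquet capacitability of open sets gives $\cp(E_m)\to\cp(E^\circ)$, and $\cp(E^\circ)=\cp(E)$ because $E$ is regular with $E=\overline{E^\circ}$ (a standard continuity property of logarithmic capacity, which nonetheless deserves a careful proof, e.g.\ via the Green functions $g_{E_m}$ and the regularity of $E$). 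Granting it, any weak$^*$ limit point of $(\omega_{E_m})$ is a probability measure on $E$ of energy at most $I[\omega_E]$, hence equals $\omega_E$, so $\omega_{E_m}\cws\omega_E$; then lower semicontinuity of $V^\nu$ gives $\liminf_m\int V^\nu\,\mathrm{d}\omega_{E_m}\geq\int V^\nu\,\mathrm{d}\omega_E=\int V^{\omega_E}\,\mathrm{d}\nu=I[\omega_E]$, so that $\limsup_m c_m\leq I[\omega_E]-I[\omega_E]=0$ while $c_m\geq0$; hence $c_m\to0$.

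Finally, for $\nu_m\cws\nu$: the $\nu_m$, being supported in the fixed compact $E$, form a weak$^*$ precompact family; let $\mu$ be any subsequential weak$^*$ limit, so $\supp(\mu)\subseteq E$ and $\mu$ has finite energy by the bound above. The principle of descent applied to $V^{\nu_m}\leq V^\nu+c_m$ together with $c_m\to0$ gives $V^\mu\leq V^\nu$ on $\C$, so $V^\mu-V^\nu$ is harmonic, nonpositive, and tends to $0$ at infinity on the connected domain $E^c$, whence $V^\mu=V^\nu$ on $E^c$ by the maximum principle. For $z\in E^\circ$ one has $z\in\mathrm{int}(E_m)$ for all large $m$, so $V^{\nu_m}(z)=V^\nu(z)+c_m\to V^\nu(z)$, and the lower envelope theorem ($\liminf_m V^{\nu_m}=V^\mu$ q.e.) gives $V^\mu=V^\nu$ quasi everywhere on $E^\circ$. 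Thus $V^\mu=V^\nu$ off the nowhere dense set $\partial E$; upgrading this to $\mu=\nu$ is the second point requiring care, and it follows from the finite-energy hypothesis on $\mu,\nu$ and $\nu$ not charging polar sets together with a domination/unicity argument (and is immediate when $\partial E$ has zero area). Once $\mu=\nu$ for every subsequential limit, the whole sequence converges weak$^*$ to $\nu$. The two places where the hypotheses on $E$ are genuinely used, and where the argument must be written out carefully, are the capacity continuity $\cp(E_m)\to\cp(E)$ and this last potential-to-measure upgrade; everything else is the energy computation already present in Proposition~\ref{prop:minenergy} plus standard potential theory.
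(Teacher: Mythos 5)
Your reduction is organized differently from the paper's proof: you sweep the whole of $\nu_{|E_m^c}$ onto $E_m$ and reduce everything to the two claims $c_m=\int g_{E_m}\mathrm{d}\nu\to0$ and $\nu_m\cws\nu$, whereas the paper keeps $\nu_{|E_m}$ untouched, discards the thin layer $\nu_{|E^\circ\setminus E_m}$ (renormalizing by $\alpha_m$) and balayages only $\nu_{|\partial E}$, proving weak$^*$ convergence through a fine-topology argument. The algebra you do (the bound $I[\nu_m]\le I[\nu]+2c_m$, the ``moreover'' part via Frostman, the descent inequality) is fine. The genuine gap is exactly the step you single out as the crucial input: the claim $\cp(E_m)\to\cp(E)$, equivalently $\cp(E^\circ)=\cp(E)$, is \emph{not} a standard continuity property following from regularity of $E$ together with $E=\overline{E^\circ}$. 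Regularity is non-thinness of $E$ at the points of $\partial E$; what you need is capacitary density of the \emph{interior} $E^\circ$ there, which is strictly stronger. Concretely, let $\{c_i\}$ be dense in $[0,1]$, choose pairwise disjoint closed disks $\overline D_i:=\overline{D(c_i,r_i)}$ with $r_i<1/4$ and $\sum_i 1/\log\frac1{2r_i}$ very small, and set $E:=[0,1]\cup\bigcup_i\overline D_i$. Then $E$ is a continuum (hence regular), $E=\overline{E^\circ}$, and $E^c$ is connected. Because the disks are disjoint and centered on $[0,1]$, every point of $E$ off the disks and every point $c_i\pm r_i$ lies on $\partial E$, so each $z\in\overline D_i$ has $\dist(z,\partial E)\le 2r_i$; hence $E_m$ is contained in the finitely many disks with $2r_i>1/m$, and a one-line energy estimate (for any probability measure $\mu$ on $\bigcup_i\overline D_i$ one has $I[\mu]\ge\big(\sum_i 1/\log\frac1{2r_i}\big)^{-1}-\log 2$) keeps $\cp(E_m)$ as small as we arranged, while $\cp(E)\ge\cp([0,1])=1/4$. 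So $\omega_{E_m}$ does not converge to $\omega_E$, $c_m\not\to0$, and both your proof of $I[\nu_m]\to I[\nu]$ and your proof of $\nu_m\cws\nu$ collapse at this point.

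To be fair, this example defeats more than your write-up: it falsifies the lemma as stated (every probability measure on $E_m$ has energy at least $\log(1/\cp(E_m))$, so for $\nu=\omega_E$ no admissible sequence $\{\nu_m\}$ exists at all), and the paper's own proof makes the parallel unjustified move when it asserts that the fine closure of $E^\circ$ coincides with $E$ ``as $E$ is regular'' --- again a property of $E^\circ$, not of $E$. The hypothesis that is really needed, and evidently intended (the paper's motivating sets are closures of Jordan domains), is that $E^\circ$ is non-thin at every point of $\partial E$; this is automatic when $E^\circ$ is connected or has finitely many components. Under that assumption your route does go through: any weak$^*$ limit $\sigma$ of $\{\omega_{E_m}\}$ satisfies $V^\sigma=\log(1/\cp(E^\circ))$ q.e.\ on $E^\circ$, hence, by fine continuity of potentials and the non-thinness of $E^\circ$ at $\partial E$, q.e.\ on all of $E$ and $\le$ that constant everywhere, so $\sigma=\omega_E$ and $\cp(E_m)\to\cp(E)$; likewise your second flagged step (upgrading $V^\mu=V^\nu$ off $\partial E$ to $\mu=\nu$) is fillable because the connected open set $E^c$ is non-thin at each point of $\partial E$, so fine continuity forces $V^\mu=V^\nu$ q.e.\ on $\partial E$ as well, whence $\mu=\nu$. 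So, with the strengthened hypothesis, your proposal is a legitimate and somewhat more streamlined alternative to the paper's argument; as written, against the stated hypotheses, it (like the lemma itself and the paper's proof) fails at the capacity-continuity step, which cannot be dismissed as a routine verification.
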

\begin{proof}
For each $m$ the measure $\nu$ can be written as
\[
\nu=\nu_{|E_m}+\nu_{|E^\circ\setminus E_m}+\nu_{|\partial E}.
\]
By the very definition of the sets $E_m$ it holds that $E_{m-1}\subset E_m$ and $E^\circ=\cup_m E_m$. Hence $\cap_m (E^\circ\setminus E_m)=\varnothing$ and therefore $|\nu_{|E^\circ\setminus E_m}|\to0$ as $m\to\infty$. If $\nu$ is supported only on the boundary of $E$, $\nu=\nu_{|\partial E}$, we set $\nu_m:=\widehat\nu_m$, where $\widehat\nu_m$ is the balayage of $\nu_{|\partial E}$ onto $E_m$ relative to $E_m^c$. Otherwise, $\nu_{|E_m}$ is not a zero measure for all $m$ large enough and therefore we can define for such $m$
\[
\nu_m := \alpha_m\nu_{|E_m} + \widehat\nu_m,
\]
where $\alpha_m:=1+|\nu_{|E^\circ\setminus E_m}|/|\nu_{|E_m}|$ is chosen so $|\nu_m|=1$, $\alpha_m\to1$ as $m\to\infty$. Clearly, $\supp(\nu_m)\subseteq E_m$.

The sequence $\{\nu_m\}$ has a weak$^*$ limit point, say $\nu^*$. Since $\nu^*(B)=\nu(B)$ for any compact set $B\subset E^\circ$, it holds that $\nu^*_{|E^\circ}=\nu_{|E^\circ}$ by the interior regularity of Borel measures. Therefore,
\[
\nu^*=\nu_{|E^\circ}+\nu_{|\partial E}^*,
\]
where $|\nu_{|\partial E}^*|=|\nu_{|\partial E}|=|\widehat\nu_m|$ and $\nu_{|\partial E}^*$ is a weak$^*$ limit point of $\{\widehat\nu_m\}$. Let us show that $\nu_{|\partial E}^*=\nu_{|\partial E}$. This will imply that $\nu^*=\nu$ and therefore $\nu_m\cws\nu$ as $\nu^*$ is an arbitrary weak$^*$ limit point of $\{\nu_m\}$.

Let $\Lambda\subseteq\N$ be a subsequence such that $\widehat\nu_m\cws\nu_{|\partial E}^*$ as $m\to\infty$, $m\in\Lambda$. Since $\supp(\nu_{|\partial E}^*)\subseteq\partial E$, it holds that
\[
V^{\nu_{|\partial E}^*}(z) = \lim_{m\to\infty,~m\in\Lambda} V^{\widehat\nu_m}(z), \quad z\in E^\circ.
\]
On the other hand, by the very properties of balayage \cite[Thm.~II.4.7]{SaffTotik}, it holds that
\[
V^{\widehat\nu_m}(z) = V^{\nu_{|\partial E}}(z) + c_m, \quad z\in E_m^\circ,
\]
where $c_m=\int g_{E_m}\mathrm{d}\nu_{|\partial E}$ and $g_{E_m}$ is the Green function for $E_m^c$ with pole at infinity. Since $E_{m+1}^c\subset E_m^c$, the maximum principle for harmonic functions applied to $g_{E_m}-g_{E_{m+1}}$ yields that the functions $g_{E_m}$ form a decreasing sequence on $\partial E$. Thus, $c:=\lim_{m\to\infty}c_m$ is well-defined. Hence, we have that
\[
V^{\nu_{|\partial E}^*} = V^{\nu_{|\partial E}} + c \quad \mbox{on} \quad E^\circ.
\]
Since logarithmic potentials are continuous in the fine topology by the very definition of the latter \cite[Sec.~I.5]{SaffTotik}, the equality above can be extended to every fine limit point of $E^\circ$. As $E$ is regular with respect to the Dirichlet problem in $E^c$, the fine closure of $E^\circ$ coincides with $E$ \cite[Thm.~A.2.1]{SaffTotik}. Thus,
\[
V^{\nu_{|\partial E}^*} = V^{\nu_{|\partial E}} + c \quad \mbox{on} \quad E\supseteq\big(\supp(\nu_{|\partial E})\cup\supp(\nu_{|\partial E}^*)\big).
\]
As both measures have finite energy and therefore are $C$-absolutely continuous, it holds that $\nu_{|\partial E}^*=\nu_{|\partial E}$ and $c=0$ by \cite[Thm.~II.4.6]{SaffTotik}.

To show that $I[\nu_m]\to I[\nu]$ as $m\to\infty$, it is enough to prove that $\limsup_{m\to\infty}I[\nu_m]\leq I[\nu]$ as the opposite inequality follows from the principle of descent \cite[Thm.~I.6.8]{SaffTotik}. It is a straightforward computation to get
\[
\nu = \nu_m - (\alpha_m-1)\nu_{|E^\circ} +\alpha_m\nu_{|E^\circ\setminus E_m} +\nu_{|\partial E}-\widehat\nu_m,
\]
where, again, the two middle terms containing $\alpha_m$ are not present when $\nu=\nu_{|\partial E}$. Thus, since $V^{\nu_{|\partial E}} - V^{\widehat\nu_m} \geq -c_m$ in $\C$ \cite[Thm.~II.4.7]{SaffTotik}, it holds that
\begin{eqnarray}
V^{\nu} &\geq& V^{\nu_m} - (\alpha_m-1)V^{\nu_{|E^\circ}} + \alpha_mV^{\nu_{|E^\circ\setminus E_m}} - c_m \nonumber \\
&\geq& V^{\nu_m} - (\alpha_m-1)V^{\nu_{|E^\circ}} -\alpha_m|\nu_{|E^\circ\setminus E_m}|\log\diam(E) - c_m \nonumber
\end{eqnarray}
on $E$, where we also used an observation that $V^{\sigma}+|\sigma|\log\diam(E)\geq0$ on $E$ for any positive measure $\sigma$ supported on $E$. Recall that $c_m,|\nu_{|E^\circ\setminus E_m}|\to0$ and $\alpha_m\to1$ (whenever present) as $m\to\infty$. Hence, integrating both sides of the last inequality against $\nu$ and taking the limit superior of the right-hand side yields
\[
I[\nu] \geq \limsup_{m\to\infty} \int V^{\nu_m}\mathrm{d}\nu = \limsup_{m\to\infty} \int V^\nu\mathrm{d}\nu_m.
\]
Using the estimate from below for $V^\nu$ once more with the same caveat concerning $\alpha_m$, we get that
\[
I[\nu] \geq \limsup_{m\to\infty}\left(I[\nu_m] - (\alpha_m-1)\int V^{\nu_{|E^\circ}}\mathrm{d}\nu_m\right).
\]
The desired inequality follows now from the fact that the integrals $\int V^{\nu_{|E^\circ}}\mathrm{d}\nu_m$ are uniformly bounded above. Indeed, notice that
\[
V^{\nu_{|E^\circ}} \leq V^\nu + \big(|\nu|-|\nu_{|E^\circ}|\big)\log\diam(E)
\]
and therefore we get by what precedes that
\begin{eqnarray}
\limsup_{m\to\infty}\int V^{\nu_{|E^\circ}}\mathrm{d}\nu_m & \leq & \limsup_{m\to\infty}\int V^\nu\mathrm{d}\nu_m + \big(|\nu|-|\nu_{|E^\circ}|\big)\log\diam(E) \nonumber \\
&\leq& I[\nu] + \big(|\nu|-|\nu_{|E^\circ}|\big)\log\diam(E). \nonumber
\end{eqnarray}

Finally assume that $\nu=\omega_E$. Since $\supp(\omega_E)\subseteq\partial E$, it holds that $\nu_m$ is simply the balayage of $\omega_E$ onto $E_m$ relative to $E_m^c$. Then $V^{\nu_m}=V^{\omega_E}+c_m=I[\omega_E]+c_m$ q.e. on $E_m$ \cite[Thm.~II.4.7]{SaffTotik}. However, the latter property uniquely characterizes equilibrium measures \cite[Thm.~I.3.3]{SaffTotik}.
\end{proof}

Let $\nu$ be a compactly supported probability Borel measure. Define
\begin{equation}
\label{omega-epsilon}
\mathrm{d}\nu_\varepsilon:=a_\varepsilon\mathrm{d}A, \quad a_\varepsilon(z) := \frac{1}{\pi\varepsilon^2}\int\mathbf{1}_\varepsilon(u-z)\mathrm{d}\nu(u),
\end{equation}
where $\mathbf{1}_\varepsilon$ is the indicator function of the disk $\{|u|<\varepsilon\}$. So defined, $\nu_\varepsilon$ is also a probability measure with compact support.
\begin{lemma}
\label{lem:energies1}
It holds that $\dist\left(\nu,\nu_\varepsilon\right)\leq\varepsilon$ and $I[\nu_\varepsilon] \to I[\nu]$ as $\varepsilon\to0$.
\end{lemma}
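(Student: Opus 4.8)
The plan is to handle the two assertions separately; the key observation behind both is that $\nu_\varepsilon$ is the convolution $\nu*\lambda_\varepsilon$, where $\lambda_\varepsilon$ is the uniform probability measure on the disk $\{|z|<\varepsilon\}$, so that $\int f\,\mathrm{d}\nu_\varepsilon$ equals the $\nu$-integral of the disk average $z\mapsto\frac1{\pi\varepsilon^2}\int_{|z-u|<\varepsilon}f(u)\,\mathrm{d}A(u)$; this follows from Fubini's theorem applied to the definition \eqref{omega-epsilon}.

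\emph{The distance bound.} Let $f$ be admissible, i.e. $|f|\le1$ and $f$ is $1$-Lipschitz on $\supp(\nu)\cup\supp(\nu_\varepsilon)$. Extending $f$ by Kirszbraun's theorem and composing with the radial retraction onto $\overline{\D}$ (a $1$-Lipschitz contraction), we may assume $f$ is defined, bounded by $1$, and $1$-Lipschitz on all of $\C$ without changing $\int f\,\mathrm{d}\nu$ or $\int f\,\mathrm{d}\nu_\varepsilon$. Then
\[
\int f\,\mathrm{d}\nu_\varepsilon-\int f\,\mathrm{d}\nu=\int\left(\frac1{\pi\varepsilon^2}\int_{|z-u|<\varepsilon}\bigl(f(u)-f(z)\bigr)\,\mathrm{d}A(u)\right)\mathrm{d}\nu(z),
\]
and since $|f(u)-f(z)|\le|u-z|<\varepsilon$ throughout the inner integral, the modulus of the right-hand side is at most $\varepsilon$. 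Taking the supremum over admissible $f$ gives $\dist(\nu,\nu_\varepsilon)\le\varepsilon$; in particular $\nu_\varepsilon\cws\nu$ as $\varepsilon\to0$, since all the measures are supported in a fixed compact set.

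\emph{The energy convergence.} Put $h_\varepsilon:=\log\frac1{|\cdot|}*\lambda_\varepsilon*\lambda_\varepsilon$. All the measures $\nu,\nu_\varepsilon$ (with $\varepsilon\le1$) are supported in a fixed compact set on which the kernel $\log\frac1{|z-u|}$ is bounded below, so Fubini--Tonelli yields $I[\nu_\varepsilon]=\iint h_\varepsilon(z-u)\,\mathrm{d}\nu(z)\,\mathrm{d}\nu(u)$. Since $z\mapsto\log\frac1{|z|}$ is superharmonic, two applications of the sub-mean-value inequality (convolve with $\lambda_\varepsilon$, then once more) give $h_\varepsilon(w)\le\log\frac1{|w|}$ for all $w$; integrating against $\nu\otimes\nu$ gives $I[\nu_\varepsilon]\le I[\nu]$ for every $\varepsilon>0$, hence $\limsup_{\varepsilon\to0}I[\nu_\varepsilon]\le I[\nu]$. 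The opposite inequality $\liminf_{\varepsilon\to0}I[\nu_\varepsilon]\ge I[\nu]$ is the principle of descent \cite[Thm.~I.6.8]{SaffTotik} applied to $\nu_\varepsilon\cws\nu$. Together these give $I[\nu_\varepsilon]\to I[\nu]$, including the case $I[\nu]=+\infty$.

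\emph{Remark on a self-contained route.} One can also avoid invoking the principle of descent: $\lambda_\varepsilon*\lambda_\varepsilon$ is radial with support in $\overline{B(0,2\varepsilon)}$, so the mean-value property gives $h_\varepsilon(w)=\log\frac1{|w|}$ once $|w|\ge2\varepsilon$, while a scaling argument gives $h_\varepsilon(0)=\log\frac1\varepsilon+\const\to+\infty$; and $\varepsilon\mapsto h_\varepsilon(w)$ is non-increasing because the potential $V^{\lambda_\varepsilon}$ decreases in $\varepsilon$ and ball-averages of a superharmonic function decrease in radius. Hence $h_\varepsilon(w)\uparrow\log\frac1{|w|}$ as $\varepsilon\downarrow0$ for every $w$, and since the $h_\varepsilon$ are uniformly bounded below on $\{z-u:z,u\in\supp(\nu)\}$, the monotone convergence theorem gives $I[\nu_\varepsilon]\to I[\nu]$ directly. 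The one point to watch is precisely this limit when $\nu$ has infinite energy (e.g.\ atoms): monotone convergence, not dominated convergence, is the correct tool, and the superharmonicity estimates must be oriented the right way.
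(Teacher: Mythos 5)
Your proof is correct, and on the energy statement it takes a genuinely different (and more self-contained) route than the paper. The distance bound is essentially the paper's computation; the Kirszbraun extension plus retraction is harmless but not needed, since every point at which $f$ is evaluated already lies in $\supp(\nu)\cup\supp(\nu_\varepsilon)=\supp(\nu_\varepsilon)$ (the paper just observes $\supp(\nu)\subset\supp(\nu_\varepsilon)$ and works there). For the energy, the paper only proves the case $I[\nu]=\infty$ directly, via the truncated kernels $I_M$ and weak$^*$ convergence (which is in substance the same lower-semicontinuity argument you invoke as the principle of descent), and it refers the finite-energy case to \cite[Sec.~3.2]{uHedMak}. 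You instead prove everything in place: writing $\nu_\varepsilon=\nu*\lambda_\varepsilon$, identifying $I[\nu_\varepsilon]=\iint h_\varepsilon(z-u)\,\mathrm{d}\nu(z)\,\mathrm{d}\nu(u)$ with $h_\varepsilon=\log\frac1{|\cdot|}*\lambda_\varepsilon*\lambda_\varepsilon$, and using the super-mean-value inequality twice (the second application is legitimate because $\log\frac1{|\cdot|}*\lambda_\varepsilon=V^{\lambda_\varepsilon}$ is itself superharmonic) to get $h_\varepsilon\le\log\frac1{|\cdot|}$, hence $I[\nu_\varepsilon]\le I[\nu]$; combined with lower semicontinuity under $\nu_\varepsilon\cws\nu$ this yields the full convergence, infinite-energy case included. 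Your concluding remark even removes the appeal to the principle of descent: the monotonicity of $h_\varepsilon$ in $\varepsilon$ (decreasing potentials plus decreasing disk averages of a superharmonic function) together with $h_\varepsilon=\log\frac1{|\cdot|}$ off $B(0,2\varepsilon)$ and the uniform lower bound on the compact set of differences makes monotone convergence applicable, which is exactly the right tool when $I[\nu]=+\infty$. What your approach buys is a proof of the lemma that does not depend on the external reference; what the paper's buys is brevity, since the finite-energy statement is quoted rather than reproved.
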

\begin{proof}
Let $f$ be a Lipschitz continuous function on $\supp(\nu_{\varepsilon})$ with Lipschitz constant 1 for some $\varepsilon>0$. Observe that $\supp(\nu)\subset\supp(\nu_\varepsilon)$. Then
\[
\int f\mathrm{d}\nu_\varepsilon - \int f\mathrm{d}\nu = \int \big(f(z)-f(u)\big)\frac{\mathbf{1}_\varepsilon(u-z)}{\pi\varepsilon^2}\mathrm{d}\nu(u)\mathrm{d}A(z) \leq \int |z-u|\frac{\mathbf{1}_\varepsilon(u-z)}{\pi\varepsilon^2}\mathrm{d}\nu(u)\mathrm{d}A(z) \leq \varepsilon
\]
by the Fubini-Tonelli theorem and since $|f(z)-f(u)|\leq|z-u|\leq\varepsilon$ for $z,u\in\supp(\nu_\varepsilon)$, $|z-u|\leq\varepsilon$. Hence, $\dist\left(\nu,\nu_\varepsilon\right)\leq\varepsilon$ and therefore $\nu_\varepsilon\cws\nu$ as $\varepsilon\to0$.

If $I[\nu]=\infty$, then
\[
I_M[\nu] := \int \min\left\{M,\log\frac{1}{|z-u|}\right\}\mathrm{d}\nu^{\otimes2}(z,u)
\]
diverges to infinity as $M\to\infty$ by the monotone convergence theorem. Since the functional $I_M$ is defined with respect to a continuous kernel, it follows from the weak$^*$ convergence of measures that
\[
\liminf_{\varepsilon\to0}I[\nu_\varepsilon] \geq \lim_{\varepsilon\to0} I_M[\nu_\varepsilon] = I_M[\nu].
\]
Because the inequality above is true for any $M$, the limit of $I[\nu_\varepsilon]$ as $\varepsilon\to0$ diverges to infinity as well. The rest of the lemma is the content of \cite[Sec.~3.2]{uHedMak}.
\end{proof}

For further use, let us state the following trivial modification of the principle of descent \cite[Thm.~I.6.8]{SaffTotik} for empirical measures.
\begin{lemma}
\label{lem:descent}
Let $\{\omega_{\boldsymbol\eta_N}\}$ be a sequence of empirical measures that converges weak$^*$ to some probability Borel measure $\nu$. Then
\[
I[\nu] \leq \liminf_{N\to\infty} I_*[\omega_{\boldsymbol\eta_N}], \quad I_*[\omega_{\boldsymbol\eta_N}] = \int_{z\neq u}\log\frac{1}{|z-u|}\mathrm{d}\omega_{\boldsymbol\eta_N}^{\otimes2}(z,u).
\]
\end{lemma}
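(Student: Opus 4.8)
The plan is to run the classical principle of descent \cite[Thm.~I.6.8]{SaffTotik} after truncating the logarithmic kernel, so as to neutralize the contribution of the diagonal — which is the only reason $I[\omega_{\boldsymbol\eta_N}]$ itself is infinite. First I would make two harmless reductions. Since in every application of this lemma all the configurations lie in a fixed compact neighborhood of $K$, I may assume there is a compact set containing $\supp(\nu)$ and all the $\supp(\omega_{\boldsymbol\eta_N})$; call its diameter $d$. Moreover, for each $N$ I may assume the points $\eta_1^N,\ldots,\eta_N^N$ are pairwise distinct, because otherwise $I_*[\omega_{\boldsymbol\eta_N}]=+\infty$ and such indices do not influence the $\liminf$. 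For $M>0$ I then introduce the kernel $k_M(z,u):=\min\{M,\log(1/|z-u|)\}$, which is continuous and bounded on the compact square and satisfies $k_M(z,z)=M$ and $k_M\le\log(1/|z-u|)$ everywhere.

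Next I would exploit that $\omega_{\boldsymbol\eta_N}\cws\nu$ forces $\omega_{\boldsymbol\eta_N}^{\otimes2}\cws\nu^{\otimes2}$ (products of continuous functions being dense in $\mathrm{C}$ of the compact square). Splitting the empirical double integral into its diagonal and off-diagonal parts gives, for every $N$,
\[
\int k_M\,\mathrm{d}\omega_{\boldsymbol\eta_N}^{\otimes2} \;=\; \frac1{N^2}\sum_{j,k}k_M(\eta_j^N,\eta_k^N) \;=\; \frac MN + \frac1{N^2}\sum_{j\neq k}k_M(\eta_j^N,\eta_k^N) \;\le\; \frac MN + I_*[\omega_{\boldsymbol\eta_N}].
\]
Since $k_M$ is bounded and continuous and all measures sit on a fixed compact set, the left-hand side tends to $\int k_M\,\mathrm{d}\nu^{\otimes2}$ as $N\to\infty$, while $M/N\to0$; taking $\liminf$ on the right I obtain $\int k_M\,\mathrm{d}\nu^{\otimes2}\le\liminf_{N\to\infty}I_*[\omega_{\boldsymbol\eta_N}]$ for each fixed $M$.

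Finally I would let $M\to\infty$. On the common compact support one has $\log(1/|z-u|)\ge-\log d$, so the functions $k_M+\log d$ are nonnegative and increase pointwise to $\log(1/|z-u|)+\log d$; by monotone convergence $\int k_M\,\mathrm{d}\nu^{\otimes2}\to I[\nu]$, and the desired inequality follows. The only step that is not completely automatic is the diagonal bookkeeping: one must check that removing the $j=k$ terms costs exactly $M/N\to0$ — which is precisely why the truncation at level $M$ is needed — and must dispose of possible coincidences among the $\eta_j^N$ (trivial, by the reduction above). Once the diagonal is excised, the argument is verbatim the classical principle of descent, which is why I expect no real obstacle here.
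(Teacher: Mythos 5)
Your proof is correct and is essentially the paper's own argument: truncate the kernel as $\min\{M,\log\frac{1}{|z-u|}\}$, use $\omega_{\boldsymbol\eta_N}^{\otimes2}\cws\nu^{\otimes2}$ to pass to the limit in $N$, absorb the diagonal at cost $M/N$, and then let $M\to\infty$ via monotone convergence. The additional bookkeeping you supply (a common compact support, distinct points, the $+\log d$ shift to make the truncated kernels nonnegative) only makes explicit what the paper's proof leaves implicit, so there is no substantive difference.
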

\begin{proof}
Since $\omega_{\boldsymbol\eta_N}\cws\omega$, it holds that $\omega_{\boldsymbol\eta_N}^{\otimes2}\cws\nu^{\otimes2}$. Hence, it follows from the monotone convergence theorem and the continuity of $\min\left\{M,\log\frac{1}{|z-u|}\right\}$ on $\C^2$ that
\begin{eqnarray}
I[\nu] &=& \lim_{M\to\infty} \int \min\left\{M,\log\frac{1}{|z-u|}\right\}\mathrm{d}\nu^{\otimes2}(z,u) \nonumber \\
&=& \lim_{M\to\infty}\lim_{N\to\infty} \int \min\left\{M,\log\frac{1}{|z-u|}\right\}\mathrm{d}\omega_{\boldsymbol\eta_N}^{\otimes2}(z,u) \nonumber \\
& \leq & \lim_{M\to\infty}\liminf_{N\to\infty} \left(I_*[\omega_{\boldsymbol\eta_N}]+\frac MN\right) = \liminf_{N\to\infty} I_*[\omega_{\boldsymbol\eta_N}]. \nonumber
\end{eqnarray}
\end{proof}

\begin{lemma}
\label{lem:points}
Let $\nu_\varepsilon$ be as in \eqref{omega-epsilon}. Then there exist configurations $\boldsymbol\eta^{N,\varepsilon}$ such that $\min_{j\neq k}|\eta_j^{N,\varepsilon}-\eta_k^{N,\varepsilon}|\geq c^\prime/\sqrt N$, $\dist\left(\nu_\varepsilon,\omega_{\boldsymbol\eta^{N,\varepsilon}}\right) = \mathcal{O}\left(N^{-1/4}\right)$, and $I_*[\omega_{\boldsymbol\eta^{N,\varepsilon}}]\to I[\nu_\varepsilon]$ as $N\to\infty$, for some constant $c^\prime$ that depends on $\nu_\epsilon$ but does not depend on $N$.
\end{lemma}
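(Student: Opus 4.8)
The plan is to discretize the smooth density $a_\varepsilon$ from \eqref{omega-epsilon} on a fine grid and then spread out any clusters of points to enforce the separation bound. First I would fix a large integer $L=L(N)$, tile the plane by a grid of axis-parallel squares of side $h=h(N)$, and in each square $Q$ that meets $\supp(\nu_\varepsilon)$ place $n_Q := \lfloor N \nu_\varepsilon(Q)\rfloor$ (or $\lceil\cdot\rceil$, adjusting a bounded number of leftover points) distinct points spread uniformly inside $Q$; since $a_\varepsilon$ is bounded, $n_Q = \mathcal{O}(N h^2)$, so each occupied square receives $\mathcal{O}(Nh^2)$ points which can be arranged on a sub-lattice of spacing $\gtrsim h/\sqrt{n_Q} \gtrsim 1/\sqrt N$, giving the separation $\min_{j\neq k}|\eta_j^{N,\varepsilon}-\eta_k^{N,\varepsilon}|\geq c'/\sqrt N$. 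Choosing $h = N^{-1/4}$ (so the number of occupied squares is $\mathcal{O}(N^{1/2})$ and the per-square rounding error summed over squares is $\mathcal{O}(N^{1/2})$, which is $o(N)$) makes the empirical measure $\omega_{\boldsymbol\eta^{N,\varepsilon}}$ agree with $\nu_\varepsilon$ on every grid square up to $\mathcal{O}(h)$ in total mass after normalization, whence $\dist(\nu_\varepsilon,\omega_{\boldsymbol\eta^{N,\varepsilon}}) = \mathcal{O}(N^{-1/4})$.

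Next I would establish the energy convergence $I_*[\omega_{\boldsymbol\eta^{N,\varepsilon}}]\to I[\nu_\varepsilon]$. The lower bound $\liminf I_*[\omega_{\boldsymbol\eta^{N,\varepsilon}}]\geq I[\nu_\varepsilon]$ is immediate from the principle of descent for empirical measures, Lemma~\ref{lem:descent}, once we know $\omega_{\boldsymbol\eta^{N,\varepsilon}}\cws\nu_\varepsilon$, which follows from the distance estimate just proved. For the matching upper bound one splits the double sum $I_*[\omega_{\boldsymbol\eta^{N,\varepsilon}}] = \frac1{N^2}\sum_{j\neq k}\log\frac1{|\eta_j-\eta_k|}$ into the diagonal-block contribution (pairs $j,k$ in the same square $Q$) and the off-diagonal contribution (pairs in distinct squares). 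For pairs in distinct squares the kernel $\log\frac1{|z-u|}$ is continuous on the relevant compact region away from the diagonal and the empirical measure converges weak$^*$, so that part converges to the corresponding integral of $\nu_\varepsilon^{\otimes2}$ over the complement of the diagonal; since $\nu_\varepsilon$ has no atoms this is all of $I[\nu_\varepsilon]$ in the limit $h\to0$. The diagonal blocks contribute at most $\sum_Q n_Q^2 \cdot \mathcal{O}(\log N)/N^2$, and since $\sum_Q n_Q^2 \leq (\max_Q n_Q)\sum_Q n_Q \lesssim (Nh^2)(N) = N^2 h^2 = N^2 N^{-1/2}$, this is $\mathcal{O}(N^{-1/2}\log N) = o(1)$; the lower bound on within-square distances ($\gtrsim 1/\sqrt N$) keeps these terms from being too negative, contributing $\mathcal{O}(N^{-1/2}\log N)$ as well. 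Combining, $\limsup I_*[\omega_{\boldsymbol\eta^{N,\varepsilon}}]\leq I[\nu_\varepsilon]$.

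The main obstacle I anticipate is the bookkeeping in the diagonal blocks: one must choose the within-square point configuration carefully enough that the self-energy $\sum_{j\neq k,\, j,k\in Q}\log\frac1{|\eta_j-\eta_k|}$ is controlled both above (away from $-\infty$, which the $1/\sqrt N$ separation handles) and below (not too large positively, which a uniform sub-lattice arrangement handles since points are then separated by $\gtrsim h/\sqrt{n_Q}$ and the number of pairs at each scale grows geometrically). Balancing the grid scale $h$ against $N$ so that simultaneously (i) the rounding/leftover mass is $o(N)$, (ii) the within-square separation is $\gtrsim 1/\sqrt N$, and (iii) $\sum_Q n_Q^2 = o(N^2/\log N)$ is the delicate point; the choice $h\sim N^{-1/4}$ does all three and also explains the $\mathcal{O}(N^{-1/4})$ rate in the distance estimate. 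Everything else — the weak$^*$ convergence, the application of Lemma~\ref{lem:descent}, and the split of the energy sum — is routine once the grid construction is in place.
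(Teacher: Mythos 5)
Your construction (grid squares of side $N^{-1/4}$, points on a sublattice of spacing $\sim N^{-1/2}$, masses $\lfloor N\nu_\varepsilon(Q)\rfloor$) is close in spirit to the paper's (which instead partitions $\supp(\nu_\varepsilon)$ into $\lceil N^{1/2}\rceil$ vertical strips subdivided into rectangles of $\nu_\varepsilon$-mass exactly $1/N$, with one point per rectangle), and your treatment of the separation bound, the $\mathcal{O}(N^{-1/4})$ distance estimate, the leftover points, and the lower bound via Lemma~\ref{lem:descent} is essentially sound (the separation between points in \emph{adjacent} squares needs a global lattice or an inset from the square boundaries, and the leftover is $\mathcal{O}(N^{1/2})$ points rather than a bounded number, but both are routine fixes). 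The genuine gap is in the upper bound $\limsup_N I_*[\omega_{\boldsymbol\eta^{N,\varepsilon}}]\leq I[\nu_\varepsilon]$, specifically in your handling of pairs lying in \emph{distinct} squares. You argue that ``the kernel is continuous on the relevant compact region away from the diagonal and the empirical measure converges weak$^*$, so that part converges to the corresponding integral.'' This does not work as stated: (i) adjacent squares touch, so on the union of products $Q\times Q'$ with $Q\neq Q'$ the kernel $\log\frac1{|z-u|}$ is unbounded above and the region is not separated from the diagonal; (ii) the partition itself is refined as $N\to\infty$, so there is no fixed continuous test function against which to invoke weak$^*$ convergence; and (iii) most importantly, for this lower semicontinuous, unbounded-above kernel, weak$^*$ convergence only yields the inequality $\liminf_N I_*\geq I[\nu_\varepsilon]$ --- that is exactly the principle of descent (Lemma~\ref{lem:descent}) --- whereas here you need the opposite direction, which weak$^*$ convergence alone can never give.

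What is missing is precisely the quantitative discrete-to-continuous comparison that constitutes the bulk of the paper's proof: one must bound each $\log\frac1{|\eta_j-\eta_k|}$ by (a multiple of) the average of $\log\frac1{|z-u|}$ over the corresponding pair of cells, with multiplicative corrections of the form $1+\mathcal{O}(1/(i-j))$ coming from diameter estimates, and then sum the errors. The paper's equal-mass rectangles with one point per rectangle make this matching automatic; in your setup one would instead bound, for non-adjacent squares, $\log\frac1{|\eta_j-\eta_k|}\leq \log\frac1{\dist(Q,Q')}\leq \log\frac1{|z-u|}+\log\bigl(1+\mathcal{O}(h/\dist(Q,Q'))\bigr)$ for all $(z,u)\in Q\times Q'$ and sum these corrections (they total $\mathcal{O}(h)$), while lumping the adjacent-square pairs together with your diagonal blocks, where the $c'/\sqrt N$ separation and the count $\sum_Q n_Q\max_{Q'}n_{Q'}=\mathcal{O}(N^{3/2})$ give an $\mathcal{O}(N^{-1/2}\log N)$ contribution, and noting that the corresponding near-diagonal part of $I[\nu_\varepsilon]$ is nonnegative so it can be dropped. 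With such an argument inserted, your route goes through and is genuinely parallel to the paper's; without it, the key inequality of the lemma is asserted rather than proved. (A small additional slip: within a square all mutual distances are below $1$, so those terms are automatically positive --- the worry for the upper bound is that they are too \emph{large}, not too negative; your separation bound is what controls that.)
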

\begin{proof}
For convenience, set $M:=\left\lceil N^{1/2}\right\rceil$. Since $\nu_\varepsilon$ is absolutely continuous with respect to $\mathrm{d}A$, there exist real numbers $x_1<x_2<\cdots<x_M$ and a positive constant $b_M$ such that the vertical strips 
\[
S_j:=\left\{z:~x_j\leq\re(z)<x_j+\frac{b_M}{M}\right\}, \quad j\in\{1,\ldots,M\},
\]
are mutually disjoint and $\supp(\nu_\varepsilon)\subset\cup_{j=1}^MS_j$. Clearly, $b_M$ is bounded above by $\diam(\supp(\nu_\varepsilon))$ and below by the 1-dimensional Lebesgue measure of the projection of $\supp(\nu_\varepsilon)$ on the $x$-axis. Moreover, it holds by \eqref{omega-epsilon} that
\[
\nu_\varepsilon\left(S_j\right) = \int_{S_j}\mathrm{d}\nu_\varepsilon \leq \frac{c_M}{\pi\varepsilon^2}\frac{\diam(\supp(\nu_\varepsilon))}{M}.
\]
In particular, the inequality above implies that
\[
M_j:= \left\lfloor \nu_\varepsilon\left(S_j\right)N\right\rfloor\leq c_1N^{1/2}
\]
(all the constants $c_i$ appearing from now on depend only $\nu_\varepsilon$). Furthermore, as $\nu_\varepsilon$ is a unit measure, it holds that
\[
N - M < \sum_{j=1}^MM_j \leq N.
\]
Let now $y_{j,1}$ be the greatest ordinate such that $\nu_\varepsilon\left(S_j\cap\left\{z:~y_{j,1}>\im(z)\right\}\right) = 0$. Then for each $k\in\left\{1,\ldots,M_j\right\}$ there exists $y_{j,k+1}$ such that
\[
\nu_\varepsilon\left(R_{j,k}\right)=N^{-1}, \quad R_{j,k} := S_j\cap\left\{z:~y_{j,k}\leq\im(z) < y_{j,k+1}\right\}.
\]
Observe that by \eqref{omega-epsilon} it holds that
\[
y_{j,k+1}-y_{j,k} \geq \frac{\pi\varepsilon^2}{b_M}\frac MN \geq \frac{\pi\varepsilon^2}{b_M}\frac1{N^{1/2}} \geq \frac{c_2}{N^{1/2}}.
\]

Set
\[
\eta_{j,k}:= x_j + \mathrm{i}y_{j,k}, \quad k\in\{1,\ldots,M_j+1\} \quad \mbox{and} \quad j\in\{1,\ldots,M\}.
\]
Then the collection $\boldsymbol\eta:=\left\{\eta_{j,k}\right\}$ contains at least $N$ and at most $N+M$ points and by the very construction
\[
\min_{(j,k)\neq (i,l)}\left|\eta_{j,k}-\eta_{i,l}\right| \geq \frac{\min\{b_M,c_2\}}{N^{1/2}} \geq \frac{c^\prime}{N^{1/2}}
\]
for some $c^\prime>0$ that depends only on $\nu_\varepsilon$ as the constants $b_M$ are uniformly bounded above and below. 

Let $R:=\cup_{j=1}^M\cup_{k=1}^{M_j}R_{j,k}$. Then
\[
I[\nu_{\varepsilon|R}] = \left(\sum_j\sum_{k,l}\int_{R_{j,k}\times R_{j,l}}+ 2\sum_{j<i}\sum_{k,l}\int_{R_{j,k}\times R_{i,l}}\right)\log\frac1{|z-u|}\mathrm{d}\nu_\varepsilon^{\otimes2}(z,u) =: \sum_j I_j + 2\sum_{j<i} I_{j,i}.
\]
To estimate the first sum notice that the monotonicity of the logarithm and the choice of the rectangles $R_{j,k}$ yield
\[
I_j \geq \frac1{N^2}\sum_{k=1}^{M_j}\sum_{l=1}^{M_j}\log\frac1{\diam(R_{j,k}\cup R_{j,l})}.
\]
Moreover, by the very choice of the points $\eta_{j,k}$, we have that $\big|\eta_{j,k}-\eta_{j,l}\big| \geq c_2|k-l|N^{-1/2}$. Thus, as the width of each $R_{j,k}$ is $b_M/M$, we deduce that
\[
\diam\left(R_{j,k}\cup R_{j,l}\right) \leq \left\{
\begin{array}{ll}
\left|\eta_{j,k}-\eta_{j,l+1}\right|\sqrt{1 + \frac {c_3}{(l+1-k)^2}}, & l\geq k, \smallskip \\
\left|\eta_{j,k+1}-\eta_{j,l}\right|\sqrt{1 + \frac {c_3}{(k+1-l)^2}}, & l<k.
\end{array}
\right.
\]
Hence, using the fact that
\[
-\frac1{2N^2}\sum_{k=1}^{M_j}\sum_{l=k}^{M_j}\log\left(1+ \frac {c_3}{(l+1-k)^2}\right) \geq -\frac1{2N^2}\sum_{k=1}^{M_j}\sum_{l=k}^{M_j}\frac {c_3}{(l+1-k)^2} \geq  -\frac{c_4}{2}\frac{M_j}{N^2}
\]
and that exactly the same estimate holds for $l\in\{1,\ldots,k-1\}$, we get that
\[
I_j \geq \frac1{N^2}\sum_{k=1}^{M_j+1}\sum_{l\neq k,l=1}^{M_j+1}\log\frac1{|\eta_{j,k}-\eta_{j,l}|} - \frac1{N^2}\sum_{k=1}^{M_j}\log\frac1{|\eta_{j,k}-\eta_{j,k+1}|} - c_4\frac{M_j}{N^2}
\]
and respectively that
\[
\sum_{j=1}^M I_j \geq \frac1{N^2}\sum_{j=1}^M\sum_{k\neq l,~k,l=1}^{M_j+1}\log\frac1{|\eta_{j,k}-\eta_{j,l}|} - c_5\frac{\log N}{N}.
\]
To estimate the second sum, fix $j,i\in\{1,\ldots,M\}$ such that $j<i$. It can be readily observed that for each $k\in\left\{1,\ldots,M_j\right\}$ there exists $l_k\in\left\{1,\ldots,M_i\right\}$ such that
\[
\diam\left(R_{j,k}\cup R_{i,l}\right) = \left\{
\begin{array}{ll}
|\eta_{j,k+1}-\eta_{i,l}+b_M/M|, & l<l_k, \smallskip \\
|\eta_{j,k}-\eta_{i,l+1}+b_M/M|, & l\geq l_k.
\end{array}
\right.
\]
Therefore, it holds that
\[
\diam\left(R_{j,k}\cup R_{i,l}\right) \leq \left(1+\frac{1}{i-j}\right)\left\{
\begin{array}{ll}
|\eta_{j,k+1}-\eta_{i,l}|, & l<l_k, \smallskip \\
|\eta_{j,k}-\eta_{i,l+1}|, & l\geq l_k.
\end{array}
\right.
\]
By the above inequality and since $M_j\leq c_1N^{1/2}$, we deduce that
\[
I_{j,i} \geq \frac1{N^2}\sum_{k=1}^{M_j}\left(\sum_{l=1}^{l_k-1}\log\frac1{|\eta_{j,k+1}-\eta_{i,l}|} + \sum_{l=l_k}^{M_i}\log\frac1{|\eta_{j,k}-\eta_{i,l+1}|}\right) - \frac{1}{i-j}\frac{c_1^2}{N}.
\]
It can be readily verified that the sequence $\{l_k\}_{k=1}^{M_j}$ is non-decreasing and therefore the pairs of indices in the double sums above never repeat themselves. Thus,
\[
I_{j,i} \geq \frac1{N^2}\sum_{k=1}^{M_j+1}\sum_{l=1}^{M_i+1}\log\frac1{|\eta_{j,k}-\eta_{i,l}|} - \frac1{N^2}\sum_{m=1}^{M_j+M_i+1}\log\frac1{|\eta_{j,k_m}-\eta_{i,l_m}|} - \frac{1}{i-j}\frac{c_1^2}{N}
\]
for some sequence $\big\{(k_m,l_m)\big\}$. So, using the fact that $|\eta_{j,k_m}-\eta_{i,l_m}|\geq c^\prime N^{-1/2}$, we get that
\begin{eqnarray}
2\sum_{j<i}I_{j,i} &\geq& 2\sum_{j<i}\left(\sum_{k,l}\log\frac1{|\eta_{j,k}-\eta_{i,l}|} - c_6\frac{\log N}{N^{3/2}} - \frac{1}{i-j}\frac{c_1^2}{N}\right) \nonumber \\
&\geq& 2\sum_{j<i}\sum_{k,l}\log\frac1{|\eta_{j,k}-\eta_{i,l}|} - c_7\frac{\log N}{N^{1/2}}. \nonumber
\end{eqnarray}

Combing the estimates for $\sum I_j$ and $\sum I_{j,i}$, we derive that
\[
I[\nu_\varepsilon] = I[\nu_{\varepsilon|R}] + \int V^{\nu_\varepsilon+\nu_{\varepsilon|R}}\mathrm{d}\nu_{\varepsilon|R^c} \geq \limsup_{N\to\infty}\left(I_*[\omega_{\boldsymbol\eta}] - c_7\frac{\log N}{N^{1/2}}+\int V^{\nu_\varepsilon+\nu_{\varepsilon|R}}\mathrm{d}\nu_{\varepsilon|R^c}\right) = \limsup_{N\to\infty}I_*[\omega_{\boldsymbol\eta}],
\]
where the last equality follows from the fact that $\nu_{\varepsilon|R^{c}}\leq N^{-1/2}$ and since the potentials $V^{\nu_\varepsilon+\nu_{\varepsilon|R}}$ are uniformly bounded on $\supp(\nu_\varepsilon)$ as the measures $\nu_\varepsilon+\nu_{\varepsilon|R}$ are absolutely continuous with respect to $\mathrm{d}A$ with uniformly bounded densities.

To show that
\[
I[\nu_\varepsilon] \leq \liminf_{N\to\infty}I_*[\omega_{\boldsymbol\eta}],
\]
we appeal to Lemma~\ref{lem:descent}, which asserts the above inequality granted we show that $\omega_{\boldsymbol\eta}\cws\nu_\varepsilon$ as $N\to\infty$. To this end, observe that the differences $y_{j,k+1}-y_{j,k}$ cannot be too large to often. Indeed, let $B_N$ be the number of the differences $y_{j,k+1}-y_{j,k}$ that are large than $N^{-1/4}$. Since the sum of the heights of the rectangle $R_{j,k}$ for a fixed index $j$ is bounded by $\diam(\supp(\nu_\varepsilon))$ independently of $j$, it holds that
\[
\frac{B_N}{N^{1/4}}+c_2\frac{N-B_N}{N^{1/2}} \leq \diam(\supp(\nu_\varepsilon))M,
\]
which immediately implies that $B_N=\mathcal{O}\left(N^{3/4}\right)$. Hence, if $f$ is a function that is bounded by 1 in modulus and satisfies the Lipschitz condition with constant 1 on $\supp(\nu_\varepsilon)$, then we can write
\[
\int f\mathrm{d}\left(\nu_\varepsilon - \omega_{\boldsymbol\eta^{N,\varepsilon}}\right)  = \sum_{j=1}^M\sum_{k=1}^{M_j}\int\left[f-f\left(\eta_{j,k}\right)\right]\mathrm{d}\nu_\varepsilon + \int_{R^c} f\mathrm{d}\nu_\varepsilon + \frac1N\sum_{j=1}^{M}f\left(\eta_{j,M_j+1}\right).
\]
Observe that each of the last two terms on the right-hand side of the equality above is of order $N^{-1/2}$ as $|f|\leq1$, $\nu_\varepsilon(R^c)<N^{-1/2}$, and $M=\big\lceil N^{1/2}\big\rceil$. Further, split the double sum into two: one over those rectangles that have heights at most $N^{-1/4}$ and the rest of them. Since the number of the ``large'' rectangles is $B_N=\mathcal{O}\left(N^{3/4}\right)$ and $|f|\leq1$, we get that this part of the sum is of order $\mathcal{O}\left(N^{-1/4}\right)$. On another hand, using the Lipschitz continuity of $f$ on the first group of the rectangles and since the width of each rectangle is $b_M/M$, we deduce that
\[
\int_{R_{j,k}}\left|f-f\left(\eta_{j,k}^{N,\varepsilon}\right)\right|\mathrm{d}\nu_\varepsilon = \mathcal{O}\left(N^{-5/4}\right)
\]
and therefore the first part of the sum is also of order $\mathcal{O}\left(N^{-1/4}\right)$. Hence, $\dist\left(\nu_\varepsilon,\omega_{\boldsymbol\eta^{N,\varepsilon}}\right) = \mathcal{O}\left(N^{-1/4}\right)$ and the claim follows.

Finally, we define $\boldsymbol\eta^{N,\varepsilon}:=\left\{\eta_j^{N,\varepsilon}\right\}_{j=1}^N$ by selecting any $N$ point from the collection $\boldsymbol\eta$. As we are discarding at most $N^{1/2}$ points, $\omega_{\boldsymbol\eta^{N,\varepsilon}}$ still converges to $\nu_\varepsilon$ in the weak$^*$ topology. Moreover, the behavior of the discrete energies also remains unaltered as the absolute value of the  contribution of the removed points is of order $\mathcal{O}\left(N^{-1/2}\log N\right)$.
\end{proof}

\begin{lemma}
\label{lem:nearpoints}
Let $\nu_\varepsilon$ be defined by \eqref{omega-epsilon}, $\boldsymbol\eta^{N,\varepsilon}$ be as in Lemma~\ref{lem:points}, and
\[
O_{N,\varepsilon} := \left\{z_1~|~\big|z_1-\eta^{N,\epsilon}_1\big|<\frac{c^\prime}{3\sqrt N}\right\} \times \cdots \times \left\{z_N~|~\big|z_N-\eta^{N,\epsilon}_N\big|<\frac{c^\prime}{3\sqrt N}\right\}.
\]
 Then for all\footnote{For brevity, we slightly abuse the notation and assume that $\boldsymbol\eta\in O_{N,\varepsilon}$ stands for $\boldsymbol\eta=\{z_1,\ldots,z_N\}$, where $(z_1,\ldots,z_N)\in O_{N,\varepsilon}$.} $\boldsymbol\eta\in O_{N,\varepsilon}$, it holds that
\[
\dist\left(\nu_\varepsilon,\omega_{\boldsymbol\eta}\right)\leq \mathcal{O}\left(N^{-1/4}\right) \quad \mbox{and} \quad \left|I_*[\omega_{\boldsymbol\eta}]-I_*[\omega_{\boldsymbol\eta^{N,\varepsilon}}]\right|\leq c^{\prime\prime}N^{-1/2}\log N
\]
for some constant $c^{\prime\prime}$ that depends only on $\nu_\varepsilon$.
\end{lemma}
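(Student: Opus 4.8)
The plan is to perturb the Fekete-type configuration $\boldsymbol\eta^{N,\varepsilon}$ by at most $c'/(3\sqrt N)$ in each coordinate and track how the two quantities of interest change. For the metric estimate, fix $f$ bounded by $1$ with Lipschitz constant $1$ on the relevant supports. Writing
\[
\int f\,\mathrm d\big(\omega_{\boldsymbol\eta}-\omega_{\boldsymbol\eta^{N,\varepsilon}}\big) = \frac1N\sum_{j=1}^N\big(f(z_j)-f(\eta_j^{N,\varepsilon})\big),
\]
the Lipschitz bound gives $|f(z_j)-f(\eta_j^{N,\varepsilon})|\le |z_j-\eta_j^{N,\varepsilon}|< c'/(3\sqrt N)$, so the whole difference is $\mathcal O(N^{-1/2})$. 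Combined with Lemma~\ref{lem:points}, which says $\dist(\nu_\varepsilon,\omega_{\boldsymbol\eta^{N,\varepsilon}})=\mathcal O(N^{-1/4})$, and the triangle inequality for $\dist(\cdot,\cdot)$, this yields $\dist(\nu_\varepsilon,\omega_{\boldsymbol\eta})=\mathcal O(N^{-1/4})$. (One should check the supports match up, which they do up to an $o(1)$ enlargement; extending $f$ is harmless.)

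For the discrete energy, write $z_j = \eta_j^{N,\varepsilon}+\zeta_j$ with $|\zeta_j|<c'/(3\sqrt N)$ and compare term by term:
\[
I_*[\omega_{\boldsymbol\eta}]-I_*[\omega_{\boldsymbol\eta^{N,\varepsilon}}]
= \frac1{N^2}\sum_{j\neq k}\log\frac{|\eta_j^{N,\varepsilon}-\eta_k^{N,\varepsilon}|}{|z_j-z_k|}.
\]
The key input from Lemma~\ref{lem:points} is the separation $|\eta_j^{N,\varepsilon}-\eta_k^{N,\varepsilon}|\ge c'/\sqrt N$, which forces $|z_j-z_k|\ge |\eta_j^{N,\varepsilon}-\eta_k^{N,\varepsilon}|-2c'/(3\sqrt N)\ge \tfrac13|\eta_j^{N,\varepsilon}-\eta_k^{N,\varepsilon}|>0$, so no term is singular. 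More precisely,
\[
\left|\log\frac{|\eta_j^{N,\varepsilon}-\eta_k^{N,\varepsilon}|}{|z_j-z_k|}\right|
\le \left|\log\left(1+\frac{|\zeta_j-\zeta_k|}{|z_j-z_k|}\right)\right| \vee \left|\log\left(1-\frac{|\zeta_j-\zeta_k|}{|\eta_j^{N,\varepsilon}-\eta_k^{N,\varepsilon}|}\right)\right|
\le \frac{C|\zeta_j-\zeta_k|}{|\eta_j^{N,\varepsilon}-\eta_k^{N,\varepsilon}|},
\]
using $|\zeta_j-\zeta_k|\le \tfrac23|\eta_j^{N,\varepsilon}-\eta_k^{N,\varepsilon}|$ to bound the logarithm by a constant multiple of its argument. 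Since $|\zeta_j-\zeta_k|\le 2c'/(3\sqrt N)$, the numerator is $\mathcal O(N^{-1/2})$, and it remains to bound $\frac1{N^2}\sum_{j\neq k}|\eta_j^{N,\varepsilon}-\eta_k^{N,\varepsilon}|^{-1}$.

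The main obstacle is exactly this last sum: a naive bound using only the minimal separation $c'/\sqrt N$ gives $\frac1{N^2}\cdot N^2\cdot \sqrt N = \mathcal O(\sqrt N)$, which is far too crude and would destroy the estimate. Instead one must exploit the geometry of the grid built in Lemma~\ref{lem:points}: the points $\eta_{j,k}^{N,\varepsilon}=x_j+\mathrm i y_{j,k}$ lie in $M=\lceil N^{1/2}\rceil$ vertical strips of width $b_M/M$, with $|x_j-x_i|\ge b_M/M$ and, within a strip, $|\eta_{j,k}-\eta_{j,l}|\ge c_2|k-l|N^{-1/2}$. Thus for a fixed point the number of other points at distance in $[rN^{-1/2},2rN^{-1/2})$ is $\mathcal O(r)$, so $\sum_{k:k\neq j}|\eta_j^{N,\varepsilon}-\eta_k^{N,\varepsilon}|^{-1}\le \sqrt N\sum_{r}\mathcal O(r)\cdot \mathcal O(r^{-1})=\mathcal O(\sqrt N\log N)$, since $r$ ranges up to $\mathcal O(\sqrt N)$ (the diameter is bounded). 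Summing over $j$ gives $\frac1{N^2}\sum_{j\neq k}|\eta_j^{N,\varepsilon}-\eta_k^{N,\varepsilon}|^{-1}=\mathcal O(N^{-1/2}\log N)\cdot\sqrt N = \mathcal O(\log N)$... wait, this needs the extra $N^{-1/2}$ from $|\zeta_j-\zeta_k|$: multiplying, $\mathcal O(N^{-1/2})\cdot\mathcal O(N^{-1/2}\log N)\cdot\sqrt N$, i.e. $\frac1{N^2}\sum_{j\neq k}\mathcal O(N^{-1/2})\cdot|\eta_j-\eta_k|^{-1} = \mathcal O(N^{-1/2})\cdot \mathcal O(N^{-1/2}\log N) = \mathcal O(N^{-1}\log N)$, and then one absorbs the discarded points as in the end of the previous proof. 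This gives the claimed $c''N^{-1/2}\log N$ bound (with room to spare), with $c''$ depending only on $\nu_\varepsilon$ through $b_M$, $c_2$, $c'$, and $\diam(\supp(\nu_\varepsilon))$.
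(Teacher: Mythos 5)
Your strategy is the same as the paper's: the distance estimate via the Lipschitz bound $\frac1N\sum_j|z_j-\eta_j^{N,\varepsilon}|<c'/(3\sqrt N)$ plus the triangle inequality with Lemma~\ref{lem:points}, and the energy estimate by comparing the two discrete energies term by term, using the separation $|\eta_j^{N,\varepsilon}-\eta_k^{N,\varepsilon}|\ge c'/\sqrt N$ to bound each logarithm by $C|\zeta_j-\zeta_k|/|\eta_j^{N,\varepsilon}-\eta_k^{N,\varepsilon}|=\mathcal O\big(N^{-1/2}\big)/|\eta_j^{N,\varepsilon}-\eta_k^{N,\varepsilon}|$, so that everything reduces to bounding $\sum_{j\neq k}|\eta_j^{N,\varepsilon}-\eta_k^{N,\varepsilon}|^{-1}$. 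Up to that point your argument is correct and coincides with the paper's.

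The gap is in the counting step. The configuration of Lemma~\ref{lem:points} is locally a genuine two-dimensional grid of mesh $\sim N^{-1/2}$: a ball of radius $2rN^{-1/2}$ about a fixed point meets $\mathcal O(r)$ strips and contains $\mathcal O(r)$ points per strip, hence $\mathcal O(r^2)$ points, and this is sharp wherever the density $a_\varepsilon$ is bounded below (which happens on some disk, since $a_\varepsilon(z)\ge\nu(\{|u-z_0|<\varepsilon/2\})/\pi\varepsilon^2$ for $|z-z_0|<\varepsilon/2$). So your claim that the dyadic annulus $[rN^{-1/2},2rN^{-1/2})$ holds only $\mathcal O(r)$ points is false, and with it the intermediate bound $\sum_{k\neq j}|\eta_j^{N,\varepsilon}-\eta_k^{N,\varepsilon}|^{-1}=\mathcal O\big(\sqrt N\log N\big)$ per point: the correct order is $\sum_{\mathrm{dyadic}\ r}\mathcal O(r^2)\cdot\sqrt N/r=\mathcal O(N)$ (up to a logarithm), and this order is attained, so your final figure $\mathcal O\big(N^{-1}\log N\big)$ ``with room to spare'' is unjustified. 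The error overshoots in the harmless direction, however: with the correct per-point count $\mathcal O(N)$, or with the paper's cruder strip-by-strip estimate $\sum_{l\neq k}|\eta_{j,k}-\eta_{j,l}|^{-1}+\sum_{i\neq j}\sum_l|\eta_{j,k}-\eta_{i,l}|^{-1}\le c_2N\log N$, one still gets $\frac{C}{N^{5/2}}\sum_{j\neq k}|\eta_j^{N,\varepsilon}-\eta_k^{N,\varepsilon}|^{-1}\le c''N^{-1/2}\log N$, which is exactly the claim; so the proof is repaired by redoing this one count. (Also, no absorption of discarded points is needed here: the lemma compares $\omega_{\boldsymbol\eta}$ with $\omega_{\boldsymbol\eta^{N,\varepsilon}}$ itself, and the full collection $\{\eta_{j,k}\}$ enters only because enlarging the point set can only increase the sum of reciprocal distances.)
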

\begin{proof}
It holds for every $\boldsymbol\eta\in O_{N,\varepsilon}$ that
\[
\dist\left(\omega_{\boldsymbol\eta},\omega_{\boldsymbol\eta^{N,\varepsilon}}\right) = \sup_f\left|\int f\mathrm{d}\omega_{\boldsymbol\eta} - \int f\mathrm{d}\omega_{\boldsymbol\eta^{N,\varepsilon}}\right| \leq \frac1N\sum_{j=1}^N|\eta_j-\eta_j^{N,\varepsilon}| <  \frac{c^\prime}{3\sqrt N}
\]
by Lipschitz continuity of $f$. Thus, the first claim follows from the triangle inequality. Furthermore, it holds that
\[
\left|I_*[\omega_{\boldsymbol\eta}]-I_*[\omega_{\boldsymbol\eta^{N,\varepsilon}}]\right| \leq \frac1{N^2}\sum_{n\neq m}\big|\log\left|1 + \frac{(\eta_n-\eta_n^{N,\varepsilon})-(\eta_m-\eta_m^{N,\varepsilon})}{\eta_n^{N,\varepsilon}-\eta_m^{N,\varepsilon}}\right|\big| \leq \frac{c^\prime}{N^{5/2}}\sum_{n\neq m}\frac{1}{|\eta_n^{N,\varepsilon}-\eta_m^{N,\varepsilon}|}.
\]
Replacing the collection $\left\{\eta_j^{N,\varepsilon}\right\}$ by $\left\{\eta_{j,k}\right\}$ constructed in Lemma~\ref{lem:points}, we shall only increase the sum on the right-hand side of the above chain of inequalities. As
\[
\sum_{l\neq k,l=1}^{M_j}\frac{1}{|\eta_{j,k}-\eta_{j,l}|} + \sum_{i\neq j,i=1}^{M}\sum_{l=1}^{M_i}\frac{1}{|\eta_{j,k}-\eta_{i,l}|} \leq c_1 \sum_{i=1}^M N^{1/2}\log N \leq c_2 N\log N
\]
for a fixed point $\eta_{j,k}$, the desired result follows.
\end{proof}

\begin{proof}[Proof of Theorem~\ref{thm:partition}] We start by proving the upper bound in \eqref{partitionlimit}. By Lemma~\ref{lem:fekete}, each extremal configuration $\{\lambda_1,\ldots,\lambda_N\}$ realizing the supremum in \eqref{deltaNK} is, in fact, a configuration of Fekete points for $K$ since $w_K\equiv1$ on $K$. That is,
\[
\delta_N(w_K) = \delta_N(K) := \sup\left\{\prod_{m < n} |z_n - z_m|:~(z_1,\ldots,z_N)\in K^N\right\}.
\]
Hence, we get from \eqref{ZNsbeta} that
\[
Z_{N,s,\beta} \leq \delta_N^\beta(K)\left(\int_\C w_K^{(s+1-N)\beta} \mathrm{d}A\right)^N.
\]
By \eqref{snbeta} and since $w_K(z)\sim|z|^{-1}$ as $|z|\to\infty$, it holds that
\[
\limsup_{N\to\infty}N^{-1}\log\left(\int_\C w_K^{(s+1-N)\beta} \mathrm{d}A\right) \leq \lim_{N\to\infty}N^{-1}\log\left(\int_\C w_K^{2+c_0} \mathrm{d}A\right) = 0.
\]
In other words,
\[
\limsup_{N\to\infty}N^{-2}\log Z_{N,s,\beta} \leq \beta\lim_{N\to\infty}N^{-2}\log \delta_N(K) = -\frac\beta2I[\omega_K],
\]
where the last equality is a well known Fekete-Szeg\H{o} theorem \cite[Thm.~5.5.2]{Ransford}.

To prove the lower bound in \eqref{partitionlimit}, let $K_m$ be defined by \eqref{Em}. Further, let $\omega_{m,\varepsilon}$ be the measure defined by \eqref{omega-epsilon} for $\nu=\omega_{K_m}$ and any $\varepsilon\in(0,\frac1m)$. Clearly, $\supp(\omega_{m,\varepsilon})\subset K$. Then we get from \eqref{ZNsbeta} that
\begin{eqnarray}
Z_{N,s,\beta}  &\geq&  \int_{\supp(\omega_{m,\varepsilon})^N}  \prod_{m < n} |z_n - z_m|^\beta \mathrm{d}A^{\otimes N}(z_1,\ldots,z_N) \nonumber \\
&=& \int \prod_{m < n} |z_n - z_m|^\beta \, \exp\left\{- \sum_{n=1}^N\log a_{m,\varepsilon}(z_n)\right\}  \, \mathrm{d}\omega_{m,\varepsilon}^{\otimes N}(z_1,\ldots,z_N).  \nonumber
\end{eqnarray}
Hence, it follows from Jensen's inequality that
\begin{eqnarray}
\log Z_{N,s,\beta} &\geq& -\int\left(\beta\sum_{m<n}\log\frac{1}{|z_n-z_m|} + \sum_{n=1}^N\log a_{m,\varepsilon}(z_n)\right)\mathrm{d}\omega_{m,\varepsilon}^{\otimes N}(z_1,\ldots,z_N)  \nonumber \\
& = & -\beta\frac{N(N-1)}{2}I[\omega_{m,\varepsilon}] - N\int\log a_{m,\varepsilon}\mathrm{d}\omega_{m,\varepsilon}. \nonumber
\end{eqnarray}
As $a_{m,\varepsilon}\leq1/\pi\varepsilon^2$, the integral $\int\log a_{m,\varepsilon}\mathrm{d}\omega_{m,\varepsilon}$ is finite and therefore
\[
\liminf_{N\to\infty}\frac1{N^2}\log Z_{N,s,\beta} \geq -\frac\beta2\lim_{\varepsilon\to0}I[\omega_{m,\varepsilon}] = -\frac\beta2I[\omega_{K_m}],
\] 
where we used Lemma~\ref{lem:energies1}.  Thus, the lower bound in \eqref{partitionlimit} follows from Lemma~\ref{lem:Em}.

For brevity, put
\[
P_N(\nu,\epsilon) := \prob\left\{\dist\left(\nu,\omega_{\boldsymbol\eta}\right)<\epsilon\right\}, \quad \boldsymbol\eta=\{\eta_1,\ldots,\eta_N\}.
\]
Further, define
\[
O_{\nu,\epsilon} := \bigg\{(\eta_1,\ldots,\eta_N)\in\C^N~|~\dist\left(\nu,\omega_{\boldsymbol\eta}\right)<\epsilon,\quad\boldsymbol\eta=\{\eta_1,\ldots,\eta_N\}\bigg\}.
\]
Observe that $O_{\nu,\epsilon}$ is an open subset of $\C^N$ for any $\nu$ and $\epsilon>0$. 

To prove the upper bound in \eqref{largedeviation}, let $\boldsymbol\alpha^{N,\epsilon}$ be a configuration that maximizes
\[
\exp\left\{ -(\beta s-2-c_0)\sum_{n=1}^N g_K(z_n) \right\} \,\prod_{m < n} |z_n - z_m|^\beta
\]
among all $N$-point configurations $\boldsymbol\alpha$ satisfying $\dist\left(\nu,\omega_{\boldsymbol\alpha}\right)\leq\epsilon$. Such a configuration always exists since the function above decays as $z^{-(\beta(s-N+1)-2-c_0)}$ by \eqref{snbeta} in each coordinate. Therefore we are simply looking for a place where a continuous function reaches its maximum on some sufficiently large ball in $\C^N$. Then
\begin{eqnarray}
\log P_N(\nu,\epsilon) &=& \log\left(\int_{O_{\nu,\epsilon}}\Omega_{N,s,\beta}(z_1,\ldots,z_N)\mathrm{d}A^{\otimes N}(z_1,\ldots,z_N)\right)\nonumber\\
&\leq& - \log Z_{N,s,\beta} - (\beta s-2-c_0)N\int g_K\mathrm{d}\omega_{\boldsymbol\alpha^{N,\epsilon}}   - \frac\beta2N^2I_*[\omega_{\boldsymbol\alpha^{N,\epsilon}}] + N\log\int_\C e^{-(2+c_0)g_K}\mathrm{d}A.\nonumber
\end{eqnarray}
Let $\nu^\epsilon$ be a weak$^*$ limit point of $\{\omega_{\boldsymbol\alpha^{N,\epsilon}}\}$ (again, as all the configurations $\boldsymbol\alpha^{N,\epsilon}$ belong to a ball of fixed radius in $\C^N$, the measures $\omega_{\boldsymbol\alpha^{N,\epsilon}}$ are compactly supported). Then, along the subsequence for which $\omega_{\boldsymbol\alpha^{N,\epsilon}}\cws\nu^\epsilon$, it holds that
\[
\limsup_{N\to\infty}N^{-2}\log P_N(\nu,\epsilon) \leq \frac\beta2\bigg(I[\omega_K]-I_\ell[\nu^\epsilon]\bigg)
\]
by Lemma~\ref{lem:descent}, limit \eqref{partitionlimit}, and since $\int g_K\mathrm{d}\omega_{\boldsymbol\alpha^{N,\epsilon}}\to\int g_K\mathrm{d}\nu^\epsilon$. Let now $\{\nu^{\epsilon_M}\}_{M=1}^\infty$, $\epsilon_M\to0$ as $M\to\infty$, be an arbitrary sequence of weak$^*$ limit points constructed above. Since $\dist\left(\nu^{\epsilon_M},\nu\right)\leq\epsilon_M$, it holds that $\nu^{\epsilon_M}\cws\nu$ as $M\to\infty$. Thus, the principle of descent and continuity of $g_K$ yield that $\liminf_{M\to\infty}I_\ell[\nu^{\epsilon_M}]\geq I_\ell[\nu]$. This includes the case $\supp(\nu)\cap K^c\neq\varnothing$ and $\ell=0$. In this situation $\supp(\nu^{\epsilon_M})\cap K^c\neq\varnothing$ for all large $M$ and $I_0[\nu]=I_0[\nu^{\epsilon_M}]=\infty$. Therefore,
\[
\limsup_{\epsilon\to0}\limsup_{N\to\infty}N^{-2}\log P_N(\nu,\epsilon) \leq \frac\beta2\bigg(I[\omega_K]-I_\ell[\nu]\bigg).
\]

Observe also that the above considerations imply the existence of the full limit in \eqref{largedeviation} when $I[\nu]=\infty$ or when $\ell=0$ and $\supp(\nu)\cap K^c\neq\varnothing$ (clearly, the limit is $-\infty$). Thus, for the lower bound in \eqref{largedeviation}, it is enough to consider measures with finite energy and, when $\ell=0$, only those that are supported on $K$.

Assume first that $\ell>0$. Let $\nu$ be a Borel probability measure with finite energy and $\nu_\epsilon$ be the measure defined by \eqref{omega-epsilon} for some $\epsilon>0$. Let further $\{\boldsymbol\eta^{N,\epsilon}\}$ be a sequence of configurations constructed in Lemma~\ref{lem:points} and $O_{N,\epsilon}$ be neighborhood constructed in Lemma~\ref{lem:nearpoints}. It follows immediately from Lemma~\ref{lem:nearpoints}, the triangle inequality, and Lemma~\ref{lem:energies1} that $O_{N,\epsilon}\subset O_{\nu_\epsilon,\epsilon}\subset O_{\nu,2\epsilon}$. Hence,
\[
P_N(\nu,2\epsilon) \geq \frac1{Z_{N,s,\beta}}\int_{O_{N,\epsilon}} \exp\left\{ -\beta s\sum_{n=1}^N g_K(z_n) \right\} \,\prod_{m < n} |z_n - z_m|^\beta\mathrm{d}A^{\otimes N}(z_1,\ldots,z_N). 
\]
Then we deduce from Jensen's inequality that
\begin{eqnarray}
\log P_N(\nu,2\epsilon) &\geq& -\log Z_{N,s,\beta} + \log|O_{N,\epsilon}| \nonumber \\
&&- \frac\beta2\int_{O_{N,\epsilon}}\left(N^2I_*[\omega_{\boldsymbol\eta}]+2sN\int g_K\mathrm{d}\omega_{\boldsymbol\eta}\right)\frac{\mathrm{d}A^{\otimes N}(\eta_1,\ldots,\eta_N)}{|O_{N,\epsilon}|} \nonumber\\
&\geq& -\log Z_{N,s,\beta} - \frac\beta2\left(N^2I_*[\omega_{\boldsymbol\eta^{N,\epsilon}}]+2sN\int g_K\mathrm{d}\nu_\epsilon\right) + N\log\left(4(c^\prime)^2\pi\right)  \nonumber \\
&& - 9N \log N - \frac\beta2\bigg(c^{\prime\prime}N\log N+2sN\left(\max_{O_{N,\epsilon}}\int g_K\mathrm{d}\omega_{\boldsymbol\eta} - \int g_K\mathrm{d}\nu_\epsilon\right)\bigg) \nonumber 
\end{eqnarray}
by Lemma~\ref{lem:nearpoints} and since $\mathrm{d}A^{\otimes N}/|O_{N,\epsilon}|$ is a probability measure on $O_{N,\epsilon}$, where $|O_{N,\epsilon}|=\left(4(c^\prime)^2\pi/9N\right)^N$ is the volume of $O_{N,\epsilon}$. By compactness of $\overline{O_{N,\epsilon}}$ it holds that
\[
\max_{O_{N,\epsilon}}\int g_K\mathrm{d}\omega_{\boldsymbol\eta} = \int g_K\mathrm{d}\omega_{\boldsymbol\eta_N}
\]
for some $\boldsymbol\eta_N=\left\{\eta_1^N,\ldots,\eta_N^N\right\}$ such that $\left(\eta_1^N,\ldots,\eta_N^N\right)\in\overline{O_{N,\epsilon}}$. By Lemma~\ref{lem:nearpoints}, it holds that $\omega_{\boldsymbol\eta_N}\cws\nu_\varepsilon$ as $N\to\infty$ and therefore
\[
\max_{O_{N,\epsilon}}\int g_K\mathrm{d}\omega_{\boldsymbol\eta} - \int g_K\mathrm{d}\nu_\epsilon \to 0 \quad \mbox{as} \quad N\to\infty.
\]
Hence,
\[
\liminf_{N\to\infty}N^{-2}\log P_N(\nu,2\epsilon) \geq \frac\beta2\bigg(I[\omega_K]-I_\ell[\nu_\epsilon]\bigg)
\]
by \eqref{partitionlimit} and Lemma~\ref{lem:points}. The lower bound in \eqref{largedeviation} follows now from Lemma~\ref{lem:energies1}.

Finally, assume that $\ell=0$. Let $\nu$ be a probability Borel measure with finite energy supported on~$K$ and $\{\nu_m\}$ be a sequence of measures granted by Lemma~\ref{lem:Em}. Then $(\nu_m)_\epsilon$ is supported in $K^\circ$ for every $\epsilon<1/2m$ and so are the measures $\omega_{\boldsymbol\eta^{N,\epsilon}}$ and $\omega_{\boldsymbol\eta}$ constructed in Lemmas~\ref{lem:points} and~\ref{lem:nearpoints} for $(\nu_m)_\epsilon$. Thus, the argument we used for the case $\ell>0$ yields now that
\[
\liminf_{\epsilon\to0}\liminf_{N\to\infty}N^{-2}P_N(\nu_m,\epsilon) \geq \frac\beta2\bigg(I[\omega_K]-I_\ell[\nu_m]\bigg).
\]
To show the lower bound in \eqref{largedeviation} it remains only to observe that $P_N(\nu,\epsilon+\epsilon_m)\geq P_N(\nu_m,\epsilon)$, where $\epsilon_m=\dist\big(\nu,\nu_m\big)$, and $I[\nu_m]\to I[\nu]$ as $m\to\infty$ by Lemma~\ref{lem:Em}.
\end{proof}

\begin{proof}[Proof of Proposition~\ref{prop:beta2}]
Let $\{\pi_{n,s}\}_{n=0}^{N-1}$ be the sequence of orthonormal polynomials with respect to the inner product
\[
\langle f,g\rangle = \int f\overline g\exp\{-2sg_K\}\mathrm{d}A.
\]
Denote by $\varkappa_{n,s}$ the leading coefficient of $\pi_{n,s}$. Then it holds \cite[Sec.~5.4]{Deift} that
\[
\log Z_{N,s,2} = \log N!-2\sum_{n=0}^{N-1}\log\varkappa_{n,s}.
\]
It was shown in \cite[Thm.~1]{SinYa12} that $K$ as described 
\[
\varkappa_{n,s} = \frac{1}{\cp^{n+1}(K)} \sqrt{\frac{n+1}{\pi}\left(1-\frac{n+1}{s}\right)} \left[1+\mathcal{O}\left(\frac{1}{n^{2\alpha}}\right)\right].
\]
Hence, it follows from \eqref{capacity} that
\[
\log Z_{N,\infty,2} = -N(N+1)I[\omega_K] +N\log\pi + \mathcal{O}(1)
\]
and more generally
\[
\log Z_{N,s,2} = \log Z_{N,\infty,2} - \sum_{k=0}^{N-1}\log(s-k) + N\log s + \mathcal{O}(1).
\]
It follows from the Stirling's formula $\log n!=\big(n+1/2\big)\log n - n + \mathcal{O}(1)$ that
\[
-\sum_{k=0}^{N-1}\log(s-k) + N\log s = N + (s-N)\log\big(1-s^{-1}N\big)+ \mathcal{O}(\log N)
\]
which yields \eqref{beta2}.
\end{proof}

For $\varepsilon>0$, set
\begin{equation}
\label{UNepsilon}
U_{N,\varepsilon}:=\left\{(z_1,\ldots,z_N)~|~\prod_{m < n} |z_n - z_m|\prod_n w_K^{N-1}(z_n) \geq \exp\left\{-\big(I[\omega_K]+\varepsilon\big)\frac{N(N-1)}2\right\}\right\}.
\end{equation}

\begin{lemma}
\label{lem:lowenergy}
For each $N\in\N$ and $\varepsilon>0$, it holds that
\[
\int_{\C^N\setminus U_{N,\varepsilon}}\Omega_{N,s,\beta}\mathrm{d}A^{\otimes N} \leq \exp\left\{-\beta\big(\varepsilon + o(1)\big)\frac{N(N-1)}{2}\right\}.
\]
\end{lemma}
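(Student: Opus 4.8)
The plan is to bound the integrand of $\Omega_{N,s,\beta}$ pointwise on $\C^N\setminus U_{N,\varepsilon}$ and then divide by $Z_{N,s,\beta}$, whose $N^2$-asymptotics are already known from \eqref{partitionlimit}. Writing $w_K=e^{-g_K}$, I would factor the weight as
\[
\exp\left\{-\beta s\sum_{n=1}^N g_K(z_n)\right\}\prod_{m<n}|z_n-z_m|^\beta = \left(\prod_{m<n}|z_n-z_m|\prod_n w_K^{N-1}(z_n)\right)^\beta\prod_n w_K^{\beta(s-N+1)}(z_n).
\]
By the definition \eqref{UNepsilon} of $U_{N,\varepsilon}$, on its complement the quantity inside the large parentheses is strictly less than $\exp\{-(I[\omega_K]+\varepsilon)N(N-1)/2\}$, so the $\beta$-th power is at most $\exp\{-\beta(I[\omega_K]+\varepsilon)N(N-1)/2\}$, a constant that factors out of the integral.

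For the remaining product, note that $g_K\geq0$ everywhere gives $0<w_K\leq1$, so \eqref{snbeta}, i.e.\ $\beta(s-N+1)>2+c_0$, yields $w_K^{\beta(s-N+1)}\leq w_K^{2+c_0}$ pointwise; since $w_K(z)\sim|z|^{-1}$ as $|z|\to\infty$ (as already used in the proof of Theorem~\ref{thm:partition}), the integral $\int_\C w_K^{2+c_0}\mathrm{d}A$ is finite. Hence
\[
\int_{\C^N\setminus U_{N,\varepsilon}}\exp\left\{-\beta s\sum_n g_K(z_n)\right\}\prod_{m<n}|z_n-z_m|^\beta\,\mathrm{d}A^{\otimes N}\leq \exp\left\{-\beta(I[\omega_K]+\varepsilon)\tfrac{N(N-1)}{2}\right\}\left(\int_\C w_K^{2+c_0}\mathrm{d}A\right)^N.
\]

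To finish, I would divide by $Z_{N,s,\beta}$ and take logarithms. By \eqref{partitionlimit}, $-\log Z_{N,s,\beta}=\tfrac\beta2 I[\omega_K]\,N^2+o(N^2)$, and $N\log\int_\C w_K^{2+c_0}\mathrm{d}A=O(N)=o(N^2)$. Adding these to $-\beta(I[\omega_K]+\varepsilon)\tfrac{N(N-1)}{2}=-\tfrac\beta2(I[\omega_K]+\varepsilon)N^2+O(N)$, the terms involving $I[\omega_K]$ cancel and the right-hand side becomes $-\tfrac\beta2\varepsilon N^2+o(N^2)$; writing $N^2=N(N-1)+N$ and absorbing the lower-order corrections, this equals $-\beta(\varepsilon+o(1))\tfrac{N(N-1)}{2}$, which is the claimed estimate. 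There is no real obstacle in the argument; the only point requiring a little care is the bookkeeping of the error terms — the $o(N^2)$ coming from the partition function and the $O(N)$ coming from the $N$-th power of the finite Lebesgue integral — both of which must be checked to be $o(1)$ relative to $N(N-1)/2$, so that the leading term $-\tfrac\beta2\varepsilon N^2$ indeed governs the bound.
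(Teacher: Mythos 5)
Your argument is correct and is essentially the paper's own proof: the same factorization of the weight into $\bigl(\prod_{m<n}|z_n-z_m|\prod_n w_K^{N-1}(z_n)\bigr)^\beta\prod_n w_K^{\beta(s-N+1)}(z_n)$, the same pointwise bound on $\C^N\setminus U_{N,\varepsilon}$ from \eqref{UNepsilon}, the same use of \eqref{snbeta} to reduce to the finite integral $\int_\C w_K^{2+c_0}\mathrm{d}A$, and the same appeal to \eqref{partitionlimit} to absorb $Z_{N,s,\beta}^{-1}$ into the $o(1)$ term. No issues.
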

\begin{proof}
For $(z_1,\ldots,z_N)\in\C^N\setminus U_{N,\varepsilon}$ it holds that
\begin{eqnarray}
\Omega_{N,s,\beta}(z_1,\ldots,z_N) &\leq& \frac{1}{Z_{N,s\beta}}\exp\left\{-\beta\big(I[\omega_K]+\varepsilon\big)\frac{N(N-1)}2\right\}\prod_{n=1}^Nw_K^{\beta(s-N+1)}(z_n) \nonumber \\
&\leq& \exp\left\{-\beta\big(\varepsilon + o(1)\big)\frac{N(N-1)}2\right\}\prod_{n=1}^Nw_K^{2+c_0}(z_n), \nonumber
\end{eqnarray}
where we used \eqref{partitionlimit} and \eqref{snbeta} for the second inequality. Now, the conclusion of the lemma follows from the fact that
\[
\int_{\C^N\setminus U_{N,\varepsilon}}\prod_{n=1}^Nw_K^{2+c_0}(z_n)\mathrm{d}A^{\otimes N} \leq \left(\int_\C w_K^{2+c_0}\mathrm{d}A\right)^N = \exp\big\{\mathcal{O}(N)\big\}.
\]
\end{proof}

For $f\in\mathrm{C}_b(\C^n)$, set
\begin{equation}
\label{fN}
f_N(z_1,\ldots,z_N) := \frac{(N-n)!}{N!}\sum_\sigma f\left(z_{\sigma(1)},\ldots,z_{\sigma(n)}\right),
\end{equation}
where the sum is taken over all distinct permutations $\sigma$ of size $n$ for $\{1,\ldots,N\}$.

\begin{lemma}
\label{lem:fN}
Let $f\in\mathrm{C}_b(\C^n)$ and $\boldsymbol z=\{z_1,\ldots,z_N\}$. Then there exists a finite constant $c(n)$ such that
\[
\left|f_N(z_1,\ldots,z_N) - \int f\mathrm{d}\omega_{\boldsymbol z}^{\otimes n}\right| \leq \max_{\C^n}|f|\frac{c(n)}{N}.
\]
\end{lemma}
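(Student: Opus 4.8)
The plan is to reduce the statement to elementary combinatorics by comparing both $f_N(z_1,\ldots,z_N)$ and $\int f\,\mathrm{d}\omega_{\boldsymbol z}^{\otimes n}$ to a common object, namely the average of $f$ over \emph{all} $n$-tuples of indices drawn from $\{1,\ldots,N\}$, repetitions allowed. First I would expand the product measure $\omega_{\boldsymbol z}^{\otimes n}=N^{-n}\big(\sum_{k=1}^N\delta_{z_k}\big)^{\otimes n}$ to write
\[
\int f\,\mathrm{d}\omega_{\boldsymbol z}^{\otimes n} = \frac1{N^n}\sum_{(i_1,\ldots,i_n)\in\{1,\ldots,N\}^n} f(z_{i_1},\ldots,z_{i_n}),
\]
and observe that, by \eqref{fN}, $f_N(z_1,\ldots,z_N)$ is precisely the arithmetic mean of $f(z_{i_1},\ldots,z_{i_n})$ over the $N!/(N-n)!$ ordered $n$-tuples $(i_1,\ldots,i_n)$ with pairwise distinct entries, the factor $(N-n)!/N!$ turning the sum over $n$-permutations of $\{1,\ldots,N\}$ into an average. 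Throughout one takes $N\ge n$, the only regime in which the statement is used.

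Next I would split the index set $\{1,\ldots,N\}^n$ into the set $D$ of tuples with pairwise distinct entries, of cardinality $N!/(N-n)!$, and its complement, of cardinality $N^n-N!/(N-n)!$, obtaining
\[
\int f\,\mathrm{d}\omega_{\boldsymbol z}^{\otimes n} - f_N = \left(\frac1{N^n}-\frac{(N-n)!}{N!}\right)\sum_{(i_1,\ldots,i_n)\in D} f(z_{i_1},\ldots,z_{i_n}) + \frac1{N^n}\sum_{(i_1,\ldots,i_n)\notin D} f(z_{i_1},\ldots,z_{i_n}).
\]
Setting $q_N:=N^{-n}\,N!/(N-n)!=\prod_{j=0}^{n-1}(1-j/N)\in(0,1]$, the first term on the right is bounded in modulus by $\big|N^{-n}-(N-n)!/N!\big|\cdot\big(N!/(N-n)!\big)\max_{\C^n}|f|=(1-q_N)\max_{\C^n}|f|$, and the second by $N^{-n}\big(N^n-N!/(N-n)!\big)\max_{\C^n}|f|=(1-q_N)\max_{\C^n}|f|$. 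Hence the left-hand side is at most $2(1-q_N)\max_{\C^n}|f|$.

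It then remains to estimate $1-q_N$. Using the elementary inequality $\prod_{j=0}^{n-1}(1-a_j)\ge1-\sum_{j=0}^{n-1}a_j$, valid for $a_j\in[0,1]$ and immediate by induction on $n$, with $a_j=j/N$, I get $1-q_N\le\sum_{j=0}^{n-1}j/N=n(n-1)/(2N)$, so that the asserted estimate holds with $c(n)=n(n-1)$. I do not expect any genuine obstacle: the argument is entirely combinatorial, and the only points that need a little care are matching up which $n$-tuples contribute to $f_N$ versus to the product measure $\omega_{\boldsymbol z}^{\otimes n}$, and the elementary product inequality invoked at the end.
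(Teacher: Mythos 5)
Your proof is correct: the identification of $f_N$ as the average of $f$ over ordered $n$-tuples of pairwise distinct indices, the split of $\{1,\ldots,N\}^n$ into distinct and non-distinct tuples, and the two bounds by $(1-q_N)\max_{\C^n}|f|$ all check out, as does the estimate $1-q_N\le n(n-1)/(2N)$ via $\prod_{j=0}^{n-1}(1-j/N)\ge 1-\sum_{j=0}^{n-1}j/N$ (valid in the only relevant regime $N\ge n$). The paper argues in the same spirit --- the discrepancy between $f_N$ and $\int f\,\mathrm{d}\omega_{\boldsymbol z}^{\otimes n}$ is entirely due to tuples with coincident coordinates --- but implements it differently: it writes down an exact inclusion--exclusion expansion of $f_N$ as $\frac{(N-n)!}{N!}\bigl(N^n\int f\,\mathrm{d}\omega_{\boldsymbol z}^{\otimes n}-N^{n-1}\sum\int f\,\mathrm{d}\omega_{\boldsymbol z}^{\otimes (n-1)}+\cdots\bigr)$ over the patterns of coinciding coordinates and then notes that each correction term is smaller by a factor $N^{-1}$ and that the number of such terms depends only on $n$, so the constant $c(n)$ is left implicit. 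Your route replaces the exact identity by a crude counting bound on the proportion $1-q_N$ of non-distinct tuples, which is slightly more elementary (no sieve bookkeeping over coincidence patterns) and has the added benefit of an explicit constant $c(n)=n(n-1)$; the paper's expansion, on the other hand, is the natural starting point if one wanted finer information, e.g.\ the exact $N^{-1}$ correction term as displayed there for $n=2$. No gaps to report.
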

\begin{proof} 
For $n=1$ it simply holds that $f_N(z_1,\ldots,z_N) = \int f\mathrm{d}\omega_{\boldsymbol z}$. When $n=2$, it is true that
\[
\left|f_N(z_1,\ldots,z_N) - \int f\mathrm{d}\omega_{\boldsymbol z}^{\otimes 2}\right| = \left|\frac1{N-1}\int f\mathrm{d}\omega_{\boldsymbol z}^{\otimes 2} - \frac{1}{N-1}\int f(u,u)\mathrm{d}\omega_{\boldsymbol z}(u)\right| \leq \frac{2\max_{\C^n}|f|}{N-1}.
\]
More generally, it holds that $f_N(z_1,\ldots,z_N)$ is equal to
\[
\frac{(N-n)!}{N!}\left(N^n\int f\mathrm{d}\omega_{\boldsymbol z}^{\otimes n} - N^{n-1}\sum\int f\mathrm{d}\omega_{\boldsymbol z}^{\otimes n-1} + \cdots + (-1)^{n-1}N\int f(u,\ldots,u)\mathrm{d}\omega_{\boldsymbol z}(u)\right),
\]
where the first sum is taken over all possible combinations of two coordinates being equal, the next sum is taken over over all possible combinations of three coordinates being equal, etc. Since the number of terms in each sum depends on $n$ but is independent of $N$, the conclusion of the lemma follows.
\end{proof}

Denote by $\mathrm{C}_c(\C^n)$ the collection of continuous functions on $\C^n$ with compact support.

\begin{lemma}
\label{lem:cws}
For any $f\in\mathrm{C}_c(\C^n)$, it holds that
\[
\lim_{\varepsilon\to0}\lim_{N\to\infty}\sup_{(z_1,\ldots,z_N)\in U_{N,\varepsilon}}\left|f_N(z_1,\ldots,z_N)-\int f\mathrm{d}\omega_K^{\otimes n}\right| = 0.
\]
\end{lemma}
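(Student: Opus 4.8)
\emph{Plan.} The plan is to reduce, via Lemma~\ref{lem:fN}, the uniform estimate to the statement that configurations which asymptotically realise the weighted Fekete quantity $\delta_N(w_K)$ of \eqref{deltaNK} have empirical measures converging weak$^*$ to $\omega_K$, and then to establish the latter by a lower‑semicontinuity argument for the modified logarithmic energy attached to the kernel $k(z,u):=\log\frac1{|z-u|}+g_K(z)+g_K(u)$, closed off by the uniqueness in Proposition~\ref{prop:minenergy}.

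\emph{Reduction.} Set $A_N(\varepsilon):=\sup_{(z_1,\dots,z_N)\in U_{N,\varepsilon}}\bigl|f_N(z_1,\dots,z_N)-\int f\,\mathrm{d}\omega_K^{\otimes n}\bigr|$. Since $|f_N|\le\max_{\C^n}|f|$ and $U_{N,\varepsilon}\subseteq U_{N,\varepsilon'}$ for $\varepsilon\le\varepsilon'$, the quantity $\limsup_{N\to\infty}A_N(\varepsilon)$ is bounded and non‑decreasing in $\varepsilon$, so $L:=\lim_{\varepsilon\to0}\limsup_{N\to\infty}A_N(\varepsilon)$ exists; it suffices to prove $L=0$ (this is the content of the lemma, with $\limsup$ in place of $\lim$). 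I would argue by contradiction: if $L>0$, then $\limsup_{N}A_N(1/m)\ge L$ for every $m$, so one can pick $N_m\uparrow\infty$ and configurations $\boldsymbol z^{(m)}\in U_{N_m,1/m}$ with $|f_{N_m}(\boldsymbol z^{(m)})-\int f\,\mathrm{d}\omega_K^{\otimes n}|>L/4$; by Lemma~\ref{lem:fN} this differs from $|\int f\,\mathrm{d}\omega_{\boldsymbol z^{(m)}}^{\otimes n}-\int f\,\mathrm{d}\omega_K^{\otimes n}|$ by at most $\max_{\C^n}|f|\,c(n)/N_m\to0$, so for all large $m$
\[
\Bigl|\int f\,\mathrm{d}\omega_{\boldsymbol z^{(m)}}^{\otimes n}-\int f\,\mathrm{d}\omega_K^{\otimes n}\Bigr|>\frac L8 .
\]
As $f\in\mathrm{C}_c(\C^n)$, its extension by zero to $\overline\C^{\,n}$ is continuous; hence, if one can show $\omega_{\boldsymbol z^{(m)}}\cws\omega_K$ (weak$^*$ on $\C$) along a subsequence, then $\omega_{\boldsymbol z^{(m)}}^{\otimes n}\cws\omega_K^{\otimes n}$ on $\overline\C^{\,n}$ and $\int f\,\mathrm{d}\omega_{\boldsymbol z^{(m)}}^{\otimes n}\to\int f\,\mathrm{d}\omega_K^{\otimes n}$, contradicting the display and forcing $L=0$.

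\emph{Weak$^*$ convergence of near‑Fekete configurations.} Recall $w_K=e^{-g_K}$ and that, $K$ being regular, $g_K$ is continuous on $\C$ with $g_K=\log^+|\cdot|+\mathcal O(1)$; consequently $k$ is bounded below on $\C^2$, and, extended to $\overline\C^{\,2}$ by $k(\infty,u)=g_K(u)+I[\omega_K]$ ($u\in\C$) and $k(\infty,\infty)=+\infty$, it is continuous as a map into $(-\infty,+\infty]$. A direct computation gives, for any empirical measure $\omega_{\boldsymbol z}$,
\[
I_*[\omega_{\boldsymbol z}]+\tfrac{2(N-1)}{N}\int g_K\,\mathrm{d}\omega_{\boldsymbol z}=\int_{z\ne u}k\,\mathrm{d}\omega_{\boldsymbol z}^{\otimes2},
\]
and, truncating $k$ from above at level $M$, passing to a weak$^*$ limit $\omega_{\boldsymbol z^{(m)}}\cws\mu$ on $\overline\C$ (so $\omega_{\boldsymbol z^{(m)}}^{\otimes2}\cws\mu^{\otimes2}$ on $\overline\C^{\,2}$), and then letting $M\to\infty$ (the excised diagonal costs only $M/N_m\to0$ because $k$ is bounded below), one gets
\[
\liminf_{m\to\infty}\Bigl(I_*[\omega_{\boldsymbol z^{(m)}}]+\tfrac{2(N_m-1)}{N_m}\int g_K\,\mathrm{d}\omega_{\boldsymbol z^{(m)}}\Bigr)\ \ge\ \iint_{\overline\C^{\,2}}k\,\mathrm{d}\mu^{\otimes2}.
\]
On the other hand, taking logarithms in \eqref{UNepsilon} and using $\log w_K=-g_K$ shows that $\boldsymbol z^{(m)}\in U_{N_m,1/m}$ is exactly the inequality $\frac{N_m}{N_m-1}I_*[\omega_{\boldsymbol z^{(m)}}]+2\int g_K\,\mathrm{d}\omega_{\boldsymbol z^{(m)}}\le I[\omega_K]+1/m$, i.e. $\frac{N_m}{N_m-1}\bigl(I_*[\omega_{\boldsymbol z^{(m)}}]+\frac{2(N_m-1)}{N_m}\int g_K\,\mathrm{d}\omega_{\boldsymbol z^{(m)}}\bigr)\le I[\omega_K]+1/m$, so the $\limsup$ of the bracket above is at most $I[\omega_K]$. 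Combining the two estimates, $\iint_{\overline\C^{\,2}}k\,\mathrm{d}\mu^{\otimes2}\le I[\omega_K]<\infty$; since $k(\infty,\infty)=+\infty$, $\mu$ carries no mass at $\infty$, hence $\mu$ is a probability measure on $\C$ with $\iint k\,\mathrm{d}\mu^{\otimes2}=I[\mu]+2\int g_K\,\mathrm{d}\mu=I_1[\mu]$. Finally, Proposition~\ref{prop:minenergy} — which extends from compactly supported measures to every probability measure $\nu$ with $\int g_K\,\mathrm{d}\nu<\infty$ by truncating $\nu$ to $\{|z|\le R\}$, renormalising, letting $R\to\infty$, and invoking the strict convexity of $I_1$ — yields $I_1[\mu]\ge I[\omega_K]$, with equality only for $\mu=\omega_K$. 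Thus $\mu=\omega_K$, which is the weak$^*$ convergence needed above.

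\emph{Main obstacle.} The delicate point is precisely this last step. A vanishing proportion of the points of a configuration in $U_{N,\varepsilon}$ may run off to infinity — for instance $N-1$ Fekete points of $K$ together with one extra point at distance $e^{N}$ still lies in $U_{N,\varepsilon}$ for every $\varepsilon>0$ once $N$ is large — so that $I_*[\omega_{\boldsymbol z^{(m)}}]$ and $\int g_K\,\mathrm{d}\omega_{\boldsymbol z^{(m)}}$ need not converge individually and Lemma~\ref{lem:descent} cannot be applied directly. Passing to the kernel $k$, which is bounded below exactly because $g_K\sim\log^+|\cdot|$ at infinity, repairs this: the combined energy $I_*+\frac{2(N-1)}{N}\int g_K$ is genuinely lower semicontinuous along weak$^*$ convergence on $\overline\C$, and the value $k(\infty,\infty)=+\infty$ automatically excludes loss of mass to infinity in the limit. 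The remaining ingredients — the reduction through Lemma~\ref{lem:fN}, the identification of membership in $U_{N,\varepsilon}$ with an energy bound, and the extension of Proposition~\ref{prop:minenergy} — are routine.
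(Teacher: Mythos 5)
Your skeleton largely matches the paper's (reduce via Lemma~\ref{lem:fN}, read membership in $U_{N,\varepsilon}$ as the asymptotic energy bound $\tfrac{N}{N-1}I_*[\omega_{\boldsymbol z}]+2\int g_K\,\mathrm{d}\omega_{\boldsymbol z}\le I[\omega_K]+\varepsilon$, pass to weak$^*$ limits, conclude by uniqueness of the minimizer), and your compactification of the escape-to-infinity problem via the kernel $k(z,u)=\log\frac1{|z-u|}+g_K(z)+g_K(u)$ on $\overline\C^{\,2}$ is correct up to and including the lower-semicontinuity step and the exclusion of an atom at $\infty$. The genuine gap is in the final identification. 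From $\iint_{\overline\C^{\,2}}k\,\mathrm{d}\mu^{\otimes2}\le I[\omega_K]<\infty$ you write $\iint k\,\mathrm{d}\mu^{\otimes2}=I[\mu]+2\int g_K\,\mathrm{d}\mu=I_1[\mu]$ and then invoke an extension of Proposition~\ref{prop:minenergy} to measures with $\int g_K\,\mathrm{d}\nu<\infty$. But finiteness of the kernel energy does not give $\int g_K\,\mathrm{d}\mu<\infty$: since $k(z,u)\ge\log^+\min(|z|,|u|)-C$, the bound only yields $\int_0^\infty\mu\big(|z|>e^t\big)^2\,\mathrm{d}t<\infty$, which is compatible with tails of order $1/\sqrt t$ and hence with $\int\log^+|z|\,\mathrm{d}\mu=\infty$; for such $\mu$ one can also have $I[\mu]=-\infty$, so $I_1[\mu]$ is an undefined $\infty-\infty$ and the splitting is illegitimate. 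In other words, the hypothesis of the very extension of Proposition~\ref{prop:minenergy} you propose may fail for the limit measure your argument produces.

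Even if one stays with the kernel energy itself, your sketched repair (truncate to $\{|z|\le R\}$, renormalise, let $R\to\infty$, invoke strict convexity) proves only the non-strict inequality: with $a=\mu(\{|z|\le R\})$ and $\mu_R$ the normalized restriction, $k\ge -C$ and Proposition~\ref{prop:minenergy} (case $\ell=1$) give $\iint k\,\mathrm{d}\mu^{\otimes2}\ge a^2I[\omega_K]-C(1-a^2)\to I[\omega_K]$, but the strict inequality for each $\mu_R\ne\omega_K$ does not survive the limit without a gap uniform in $R$, and ``strict convexity of $I_1$'' is unavailable precisely when $I_1[\mu]$ is not well defined. What you need — that $\omega_K$ is the \emph{unique} minimizer of $\mu\mapsto\iint k\,\mathrm{d}\mu^{\otimes2}$ over all probability measures on $\C$, i.e.\ positivity with equality case of the logarithmic energy of the mass-zero signed measure $\mu-\omega_K$ without compactness of supports — is true but requires a genuine argument not contained in your sketch nor in the results the paper quotes. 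The paper sidesteps all of this at the configuration level: because $f\in\mathrm{C}_c$, the near-maximizing configurations may be assumed to lie in a fixed ball, so every weak$^*$ limit is compactly supported and Lemma~\ref{lem:descent} plus Proposition~\ref{prop:minenergy} apply verbatim. You can close your gap the same way: a configuration in $U_{N,\varepsilon}$ has only a fraction $o_R(1)$ (uniformly in $N$) of its points outside $B_R$, and each such far point can be moved to the point of $K$ maximizing the associated weighted polynomial without decreasing the weighted Vandermonde (Bernstein--Walsh, as in Lemma~\ref{lem:fekete}), hence without leaving $U_{N,\varepsilon}$, while changing $f_N$ by $o_R(1)$; after this reduction your limit measure is compactly supported and Proposition~\ref{prop:minenergy} finishes the proof as stated. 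Alternatively, supply a complete proof of the uniqueness of the $k$-energy minimizer among non-compactly supported measures.
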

\begin{proof}
For fixed $\varepsilon$, let $\left\{\left(\lambda^{N,\varepsilon}_1,\ldots,\lambda^{N,\varepsilon}_N\right)\right\}_{n\in\Lambda}$, $\Lambda\subseteq\N$, be a maximizing sequence for the first limit in question. Since $f$ has compact support, we can assume that $|\lambda_k^{N,\varepsilon}|<R$ for some $R=R(f)$ large enough. Set
\[
\boldsymbol \lambda^{N,\varepsilon} := \left\{\lambda^{N,\varepsilon}_1,\ldots,\lambda^{N,\varepsilon}_N\right\}.
\]
Let $\nu_\varepsilon$ be a weak$^*$ limit point of $\left\{\omega_{\boldsymbol\lambda^{N,\varepsilon}}\right\}$. Clearly, $\nu_\varepsilon$ is also supported in the disk of radius $R$. Then it follows from Lemma~\ref{lem:fN}, the choice of $\left\{\boldsymbol \lambda^{N,\varepsilon}\right\}$ that
\[
\limsup_{N\to\infty}\sup_{(z_1,\ldots,z_N)\in U_{N,\varepsilon}}\left|f_N(z_1,\ldots,z_N) - \int f\mathrm{d}\omega_K^{\otimes n}\right| = \left|\int f\mathrm{d}\nu_\varepsilon^{\otimes n} - \int f\mathrm{d}\omega_K^{\otimes n}\right|
\]
(observe that $\omega_{\boldsymbol\lambda^{N,\varepsilon}}^{\otimes n}\cws\nu_\varepsilon^{\otimes n}$ since $\omega_{\boldsymbol\lambda^{N,\varepsilon}}\cws\nu_\varepsilon$). Further, let $\nu$ be a weak$^*$ limit point of $\{\nu_\varepsilon\}$ such that
\[
\limsup_{\varepsilon\to0}\lim_{N\to\infty}\sup_{(z_1,\ldots,z_N)\in U_{N,\varepsilon}}\left|f_N(z_1,\ldots,z_N)-\int f\mathrm{d}\omega_K^{\otimes n}\right| = \left|\int f\mathrm{d}\nu^{\otimes n} - \int f\mathrm{d}\omega_K^{\otimes n}\right|.
\]
The proof of the lemma will be completed if we show that $\nu=\omega_K$. To this end, recall that $\left(\lambda^{N,\varepsilon}_1,\ldots,\lambda^{N,\varepsilon}_N\right)\in U_{N,\varepsilon}$ and therefore
\[
I_*[\omega_{\boldsymbol\lambda^{N,\varepsilon}}] + 2\int g_K\mathrm{d}\omega_{\boldsymbol\lambda^{N,\varepsilon}} \leq I[\omega_K] + \varepsilon.
\]
Then it follows from Proposition~\ref{prop:minenergy}, Lemma~\ref{lem:descent}, and the inequality above that
\[
I[\omega_K] \leq I_1[\nu_\varepsilon] \leq I[\omega_K] + \varepsilon.
\]
Applying principle of descent once more, we get that
\[
I[\omega_K] \leq I_1[\nu]\leq \liminf I_1[\nu_\varepsilon] \leq I[\omega_K].
\]
The desired conclusion now follows from Proposition~\ref{prop:minenergy}.
\end{proof}

\begin{proof}[Proof of Theorem~\ref{thm:cws}] Fix $f\in\mathrm{C}_c(\C^n)$. That is, we assume that $f$ has compact support. Define $f_N$ by \eqref{fN}. Then
\begin{eqnarray}
\int_{\C^N} f_N\Omega_{N,s,\beta}\mathrm{d}A^{\otimes N} &=& \frac{(N-n)!}{N!}\sum_\sigma\int_{\C^N}f\left(z_{\sigma(1)},\ldots,z_{\sigma(n)}\right)\Omega_{N,s,\beta}(z_1,\ldots,z_N)\mathrm{d}A^{\otimes N}(z_1,\ldots,z_N) \nonumber \\
&=& \int_{\C^n} f\Omega^{(n)}_{N,s,\beta}\mathrm{d}A^{\otimes n} \nonumber
\end{eqnarray}
as $\Omega_{N,s,\beta}$ is symmetric with respect to the permutations of $z_1,\ldots,z_N$. Moreover, since $\Omega_{N,s,\beta}$ is a probability density function by its very definition, it holds that
\[
\int_{\C^n} f\Omega^{(n)}_{N,s,\beta}\mathrm{d}A^{\otimes n} - \int_{\C^n}f\mathrm{d}\omega_K^{\otimes n} = \int_{\C^N} \left(f_N- \int_{\C^n}f\mathrm{d}\omega_K^{\otimes n}\right)\Omega_{N,s,\beta}\mathrm{d}A^{\otimes N}.
\]
Hence, by Lemma~\ref{lem:lowenergy} we only need to show that
\[
\lim_{N\to\infty}\int_{U_{N,\varepsilon}} \left(f_N- \int_{\C^n}f\mathrm{d}\omega_K^{\otimes n}\right)\Omega_{N,s,\beta}\mathrm{d}A^{\otimes N} = 0
\]
for any $\varepsilon>0$ as $\sup_{\C^N}|f_N|=\sup_{\C^n}|f|$. The desired conclusion now follows from Lemma~\ref{lem:cws} since
\[
\int_{U_{N,\varepsilon}} \left|f_N- \int_{\C^n}f\mathrm{d}\omega_K^{\otimes n}\right|\Omega_{N,s,\beta}\mathrm{d}A^{\otimes N} \leq  \sup_{(z_1,\ldots,z_N)\in U_{N,\varepsilon}}\left|f_N(z_1,\ldots,z_N)-\int f\mathrm{d}\omega_K^{\otimes n}\right|,
\]
where we once more used the fact that $\Omega_{N,s,\beta}$ is positive and has unit integral over $\C^N$.

Now, let $D$ be a large enough ball in $\C^n$ to contain $K^n$ and $f$ be a function supported in $D$ satisfying $0\leq f\leq1$ and such that $f\equiv1$ on $K^n$. Then
\[
\limsup_{N\to\infty}\int_{D^c}\Omega^{(n)}_{N,s,\beta}\mathrm{d}A^{\otimes n} \leq 1 - \lim_{N\to\infty}\int_Df\Omega^{(n)}_{N,s,\beta}\mathrm{d}A^{\otimes n} = 1 - \int f\mathrm{d}\omega_K^{\otimes n} = 0.
\]
Since any $f\in\mathrm{C}_b(\C^n)$ can be written as a sum $f_c+(f-f_c)$, where $f_c\in\mathrm{C}_c(\C^n)$ and $f\equiv f_c$ in $D$, the general claim follows.
\end{proof}

\bibliographystyle{plain}
\bibliography{../bibliography}

\end{document}